\documentclass{article}
\usepackage{amsmath}
\usepackage{amsfonts}
\usepackage{graphicx}% subfigure}
\usepackage[pdftex]{hyperref}
\usepackage[dutch,english]{babel}
\usepackage{amsmath,amssymb}
\usepackage{amsthm}
\usepackage[usenames,dvipsnames]{color}
\usepackage{tikz}
\usepackage{accents}
\usepackage{xcolor}
\usepackage{hyperref}
\usepackage{mathtools}
\usepackage{multicol}
\usepackage{comment}

\usepackage{subcaption}

\usepackage{tikz}
\usetikzlibrary{shadows, arrows}

\DeclareMathOperator{\Arg}{Arg}

\theoremstyle{plain}
\newtheorem{thr}{Theorem}[section]

\newtheorem{lem}[thr]{Lemma}
\newtheorem{prop}[thr]{Proposition}
\newtheorem{cor}[thr]{Corollary}
\theoremstyle{definition}
\newtheorem{defi}[thr]{Definition}
\newtheorem{ex}[thr]{Example}
\theoremstyle{remark}
\newtheorem{remk}{Remark}
\theoremstyle{remark}

\newcommand{\field}[1]{\mathbb{#1}}
\newcommand{\R}{\field{R}}

\newcommand{\C}{\field{C}}

\definecolor{wred}{rgb}{0.7,0.18,0.12}
\definecolor{wgreen}{rgb}{0.1,0.53,0.37}

\numberwithin{equation}{section}

\def\barray{\begin{eqnarray*}}             \def\earray{\end{eqnarray*}}
\def\beq{\begin{equation}} \def\eeq{\end{equation}}

\title{A parametrisation method for high-order phase reduction in coupled oscillator networks}

\author{S\"oren von der Gracht\thanks{Department of Mathematics, Paderborn University, Germany, \href{mailto:soeren.von.der.gracht@uni-paderborn.de}{soeren.von.der.gracht@uni-paderborn.de}}, 
Eddie Nijholt\thanks{\mbox{Department of Mathematics, Imperial College London, United Kingdom, \break \href{mailto:eddie.nijholt@gmail.com}{eddie.nijholt@gmail.com}}}, Bob Rink\thanks{\mbox{Department of Mathematics, Vrije Universiteit Amsterdam, The Netherlands, \href{mailto:b.w.rink@vu.nl}{b.w.rink@vu.nl}}}}

\date{\today}

\excludecomment{percent}

\begin{document}

\maketitle

\begin{abstract} 
We present a novel method for high-order phase reduction in networks of weakly coupled oscillators and, more generally, perturbations of reducible normally hyperbolic (quasi-)periodic tori. Our method works by computing an asymptotic expansion for an embedding of the perturbed invariant torus, as well as for the reduced phase dynamics in local coordinates. Both can be determined to arbitrary degrees of accuracy, and we show that the phase dynamics may directly be obtained in normal form. %We demonstrate that discrete symmetries are preserved by the method, 
We apply the method to predict remote synchronisation in a chain of coupled Stuart-Landau oscillators. 
\end{abstract}

\section{Introduction}

Many systems in  science and engineering consist of coupled periodic processes. Examples vary from the motion of the planets, to the synchronous flashing of fireflies \cite{fireflies}, and from the activity of neurons in the brain \cite{golomb}, to power grids and electronic circuits. The functioning and malfunctioning of these coupled systems is often determined by a form of collective behaviour of its constituents, perhaps most notably their synchronisation \cite{kurths, pikovsky}. For example, synchronisation of neurons  plays a critical role in cognitive
processes \cite{Nicolette, palva}. %while excessive synchrony among regions of the brain has been associated with neurological disease \cite{Lehnertz2009}. 

In this paper, we consider the situation where the coupling between the periodic processes is weak, a case that is amenable to rigorous mathematical analysis. Specifically, we assume that the evolution of the processes can be modelled by a system of differential equations of the form
\begin{align} \label{coupledoscillators}
\dot x_j = F_j(x_j) + \varepsilon \, G_j(x_1, \ldots, x_m)   \ \mbox{for}\ x_j \in \mathbb{R}^{M_j} \ \mbox{and}\ j=1, \ldots, m\, .
\end{align}
The vector fields $F_j: \mathbb{R}^{M_j} \to \mathbb{R}^{M_j}$ in \eqref{coupledoscillators} determine the dynamics of the uncoupled oscillators:  we assume that each $F_j$ possesses a hyperbolic $T_j$-periodic orbit $X_j(t)$. In the uncoupled limit---when $\varepsilon=0$---equations \eqref{coupledoscillators} thus admit a normally hyperbolic periodic or quasi-periodic invariant torus $\mathbb{T}_0 \subset \mathbb{R}^M$ (where $M := M_1 + \ldots + M_m$), consisting of the product of these periodic orbits. 
The functions $G_j$ in \eqref{coupledoscillators} model the interaction between the oscillators, for example through a (hyper-)network. The interaction strength $0 \leq \varepsilon \ll 1$ is assumed small, so that the unperturbed torus $\mathbb{T}_0$ persists as an invariant manifold $\mathbb{T}_{\varepsilon}$ for \eqref{coupledoscillators}, depending smoothly on $\varepsilon$, as is guaranteed by F\'enichel's theorem \cite{fenichel1979, wechselberger2020}. 

The process of finding the  equations of motion that govern the dynamics on the persisting  torus $\mathbb{T}_{\varepsilon}$ is usually referred to as {\it phase reduction} \cite{wilson, nakao, Pietras}. Phase reduction has proved a powerful tool in the study of the synchronisation of coupled oscillators, especially because it often realises a considerable reduction of the dimension---and hence  complexity---of the system. 
Various methods of phase reduction have been introduced over the past decades, the most well-known appearing perhaps in the work on chemical oscillations of Kuramoto \cite{Kuramotobook}. We refer to \cite{Pietras} for an extensive overview of established phase reduction techniques, and refrain from providing an overview of these methods here. 

Most existing phase reduction methods provide a first-order approximation of the  dynamics on the persisting invariant torus in terms of the small coupling parameter. However, there are various instances where such a first-order approximation is insufficient, see \cite{BickBoehle,  rosenblum, Pazo, numericalphasereduction, nijholt2022emergent}, in particular when the first-order reduced dynamics is structurally unstable. For instance, it was observed in \cite{rosenblum} that ``remote synchronisation'' \cite{remotestar} cannot  be analysed with first-order methods. More accurate ``high-order phase reduction'' techniques (that go beyond the first-order approximation) have only been introduced very recently \cite{BickBoehle, Gengel, Pazo}.  They have already been applied successfully, for example to predict remote synchronisation \cite{rosenblum}. However, to the best of our knowledge, mathematically rigorous high-order phase reduction methods have only been derived in the special case that the unperturbed oscillators are either Stuart-Landau oscillators  \cite{Gengel, Pazo} or deformations thereof \cite{AshwinRodrigues, BickBoehle}. In that setting, phase reduction can be performed by computing an expansion of the phase-amplitude relation that defines the invariant torus. However, this procedure does not generalise to arbitrary systems of the form \eqref{coupledoscillators}.   % but would not be available in general. % in which case the invariant torus can be described by phase-amplitude relations. 

This paper  presents a novel method for high-order phase reduction, that applies to general coupled oscillator systems of the form \eqref{coupledoscillators}. 
Our method works by computing an expansion (in the small parameter $\varepsilon$) of an embedding
$$e: (\mathbb{R}/2\pi\mathbb{Z})^m \to \mathbb{R}^M\, $$
 of the persisting invariant torus $\mathbb{T_{\varepsilon}}$. In addition, it computes 
 an expansion of the dynamics on $\mathbb{T}_{\varepsilon}$ in local coordinates, in the form of a so-called ``reduced phase vector field''  $${\bf f}: (\mathbb{R}/2\pi\mathbb{Z})^m \to \mathbb{R}^m$$ 
 on the standard torus $(\mathbb{R}/2\pi\mathbb{Z})^m$.  We find these $e$ and ${\bf f}$ by solving a so-called ``conjugacy equation''.  
 Our method is thus inspired by the work of De la Llave et al. \cite{parameterisation_book}, who popularised the idea of finding invariant manifolds by solving conjugacy equations. In fact, this idea was  used in \cite{HaroCanadell2} to design a quadratically convergent iterative scheme for finding normally hyperbolic invariant tori. However, in \cite{HaroCanadell2} these tori are required to carry Diophantine quasi-periodic motion, not only before but also after the perturbation.    

 The phase reduction method presented in this paper is more similar in nature to the parametrisation method developed in \cite{BobIanMartin}. There the idea of parametrisation is used to calculate expansions of slow manifolds  and their flows in geometric singular perturbation problems  \cite{wechselberger2020}. Just like the method in \cite{BobIanMartin}, the phase reduction method presented here yields  asymptotic expansions to finite order, but it poses no restrictions on the nature of the dynamics on the invariant torus.

We now sketch the idea behind our method. Let us write
 ${\bf F}_0$ for the vector field on $\mathbb{R}^M = \mathbb{R}^{M_1}\times \ldots \times \mathbb{R}^{M_m}$ that governs the dynamics of the uncoupled oscillators in \eqref{coupledoscillators}, that is,  
\begin{align}\label{F0def}
{\bf F}_0(x_1, \ldots, x_m) := (F_1(x_1), \ldots, F_m(x_m))\, .
\end{align}
Our starting point is an embedding of the invariant torus $\mathbb{T}_0$ for this ${\bf F}_0$. Recall our assumption that every $F_j$  possesses a hyperbolic periodic orbit $X_j(t)$ of minimal period $T_j>0$. We denote the frequency of this orbit by $\omega_j:= \frac{2\pi}{T_j}$.  
An obvious embedding of $\mathbb{T}_0$ is the map 
$e_{0}:   (\mathbb{R}/2\pi\mathbb{Z})^m      \to \mathbb{R}^{M}$
  defined by 
\begin{align}\label{unperturbedembedding}
e_0(\phi) = e_0(\phi_1, \ldots, \phi_m) := \left(X_1\left( \omega_1^{-1}\phi_1 \right), \ldots, X_m\left(\omega_m^{-1} \phi_m \right) \right) \, .
\end{align}
In fact, this $e_0$ sends the periodic or quasi-periodic solutions of the ODEs
$$\dot \phi = \omega := (\omega_1, \ldots, \omega_m) $$ 
on 
$(\mathbb{R}/2\pi\mathbb{Z})^m$ 
%integral curves of the periodic or quasi-periodic vector field 
 %$${\bf \omega} := (\omega_1, \ldots, \omega_m) \ \mbox{on}\  (\mathbb{R}/2\pi\mathbb{Z})^m$$
 to integral curves of ${\bf F}_0$. In other words---see also Lemma \ref{basiclemma} below---it  satisfies the conjugacy equation
 $$e_0' \cdot \omega = {\bf F}_0 \circ e_0  \, .$$
 The idea is now that we search for an asymptotic approximation of an embedding of the persisting torus $\mathbb{T}_{\varepsilon}$ by solving a similar conjugacy equation. 
We do this by making a series expansion ansatz for such an embedding,  of the form 
$$e = e_0 + \varepsilon e_1 +  \varepsilon^2 e_2 + \ldots : (\mathbb{R}/2\pi\mathbb{Z})^m \to \mathbb{R}^M\, , $$  as well as for  
 a reduced phase vector field $${\bf f} = \omega + \varepsilon {\bf f}_1 +  \varepsilon^2 {\bf f}_2  + \ldots: (\mathbb{R}/2\pi\mathbb{Z})^m \to \mathbb{R}^m\, .$$ 
%on $(\mathbb{R}/2\pi\mathbb{Z})^m$. 
Indeed, writing ${\bf F} = {\bf F_0} + \varepsilon {\bf F}_1: \mathbb{R}^M\to\mathbb{R}^M$, with 
${\bf F}_0$ as above, and $${\bf F}_1(x) := (G_1(x), \ldots, G_m(x))\,  $$
 denoting the coupled part of  \eqref{coupledoscillators}, we have that $e$  maps integral curves of ${\bf f}$ to solutions of \eqref{coupledoscillators}, exactly  when the conjugacy equation 
$$e' \cdot {\bf f} = {\bf F} \circ e$$ 
holds. If this is the case, then    $\mathbb{T}_{\varepsilon} = e((\mathbb{R}/2\pi\mathbb{Z})^m)$ is the persisting invariant torus, whereas the vector field ${\bf f}$ on $(\mathbb{R}/2\pi\mathbb{Z})^m$ represents the dynamic  on $\mathbb{T}_{\varepsilon}$ in local coordinates, that is, it determines the  reduced phase dynamics.

We will see that the conjugacy equation for $(e, {\bf f})$ translates into a sequence of iterative equations for $(e_1, {\bf f}_1), (e_2, {\bf f}_2), \ldots$. We will show how to solve these iterative equations, which then allows us to compute the expansions for $e$ and ${\bf f}$ to any desired order in the small parameter. 
Because the embedding of the torus $\mathbb{T}_{\varepsilon}$ is not unique, neither are the solutions $(e_j, {\bf f}_j)$ to these iterative equations. We characterize the extent to which  one is free to choose these solutions, and we show how this freedom can be exploited to obtain ${\bf f}_j$ that are in {\it normal form}. This means that ``nonresonant'' terms have been removed from the reduced phase equations to high order.

A crucial requirement for the solvability of the iterative equations is that the torus $\mathbb{T}_0$ is {\it reducible}. Reducibility is a property of the unperturbed dynamics normal to $\mathbb{T}_0$. We shall define it at the hand of an embedding of the so-called {\it fast fibre bundle} of $\mathbb{T}_0$. We call such an embedding a {\it fast fibre map}. The fast fibre map  is an important ingredient of our   method.
An invariant torus for an uncoupled oscillator system is always reducible. We   show in Section \ref{Floquetsection} how, in this case, the fast fibre map can  be obtained from the Floquet decompositions of the fundamental matrix solutions of the periodic orbits $X_j(t)$. 
We remark that by using fast fibre maps, we are able to avoid the use of isochrons \cite{guckenheimer} to characterise the dynamics normal to $\mathbb{T}_0$. Our parametrisation method is therefore  not restricted to the case where the periodic orbits $X_j(t)$ are stable limit cycles---it suffices if they are hyperbolic.
%In section \ref{sec:iterative} we derive the iterative equations that determine the $e_j$ and ${\bf f}_j$. %Because the embedding $e$ is not unique, neither are the solutions to these equations, and we show how this ambiguity can be overcome. W
We also stress that our  method is not restricted to weakly coupled oscillator systems: it applies whenever the unperturbed embedded torus $\mathbb{T}_0$ is quasi-periodic,  normally hyperbolic and  reducible.

%together with a representation in local coordinates of the linearised dynamics of ${\bf F}_0$ on this fast fibre bundle. 

%Several papers about  phase reduction recently appeared. The authors of \cite{pazoleon} 
 %perform 2nd and 3rd order phase reduction for a network of coupled Stuart-Landau oscillators. Namely: all-to-all, mean field coupled, identical oscillators. In this setting, phase reduction will yield essentially the same equation of motion for every oscillator, which limits the amount of work needed. Moreover, every term in the phase reduced equations is “resonant” in this setting. This is because all the oscillators are identical, and therefore have the same frequency. This means that “averaging” is not needed. Indeed, they work with the original phase-amplitude variables, and don’t have to apply near-identity transformations. In the end they analyse some continuum limits of their reduced equations, and are able to produce a bifurcation diagram for $N=4$.

%In comparison, the paper of Gengel et al produces 2nd order phase reduced equations for a chain of 3 linearly coupled and non-identical (hence nonresonant) Stuart-Landau oscillators. However, these authors use  the original phase-amplitude variables and do not exploit that there is freedom to embed the torus in a way that simplifies the reduced dynamics.

This paper is organised as follows. In section \ref{sec:iterative} we discuss the conjugacy problem for $(e, {\bf f})$ in more detail, and derive the iterative equations for $(e_j, {\bf f}_j)$. In section \ref{sec:parametrisationsection} we introduce fast fibre maps and use them to define when an embedded (quasi-)periodic torus is reducible. In section \ref{solutionsection} we explain how the fast fibre map can be used to solve the iterative equations for $(e_j, {\bf f}_j)$.  We give formulas for the solutions, and discuss their properties. Section \ref{Floquetsection} shows how to compute the fast fibre map for a coupled oscillator system, treating  the Stuart-Landau oscillator as an example. We finish with an application/illustration of our  method in section \ref{sec:examplesection}, in which we prove that remote synchronisation occurs in a chain of weakly coupled Stuart-Landau oscillators.

\section{An iterative scheme}  \label{sec:iterative}
We start this section with a proof of our earlier claim about the embedding $e_0$. In the formulation of Lemma \ref{basiclemma} below, we use the notation 
\begin{align}\label{e0G0first} \partial_\omega e_0 := e_0'\cdot \omega = \left. \frac{d}{ds}\right|_{s=0} \!\!\!\!\!\!\! e_0(\, \cdot +s \omega) \, 
\end{align} 
for the (directional) derivative of $e_0$ in the direction  of the vector $\omega\in\mathbb{R}^m$. Like $e_0$ itself, $\partial_\omega e_0$  is a smooth map from $(\mathbb{R}/2\pi\mathbb{Z})^m$ to $\mathbb{R}^M$. 
%Moreover, the invariant torus $\Gamma(\mathbb{T}^N) \subset \mathbb{R}^n$  is normally hyperbolic, because the periodic orbits $\gamma_j$ are all hyperbolic.
\begin{lem}\label{basiclemma}
 The embedding 
$e_{0}$
  defined in \eqref{unperturbedembedding}
  satisfies the conjugacy equation
$$\partial_{\omega}e_0 \ (= e_0' \cdot \omega) = {\bf F}_0 \circ e_0\, . $$ 
\end{lem}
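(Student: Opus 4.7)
The plan is to verify the conjugacy equation component by component, using only the chain rule and the defining ODE $\dot{X}_j = F_j(X_j)$ for each of the unperturbed periodic orbits.

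First, I would write out $\partial_\omega e_0(\phi)$ component-wise. By definition \eqref{e0G0first}, the $j$-th component is
\[
\left.\frac{d}{ds}\right|_{s=0} X_j\bigl(\omega_j^{-1}(\phi_j+s\omega_j)\bigr) = \left.\frac{d}{ds}\right|_{s=0} X_j\bigl(\omega_j^{-1}\phi_j + s\bigr),
\]
where the cancellation of $\omega_j^{-1}$ with $\omega_j$ is the key observation (and the reason the frequency $\omega_j$ appears in the parametrisation in the first place). The chain rule then turns this into $\dot{X}_j(\omega_j^{-1}\phi_j)$.

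Next, I would invoke the assumption that $X_j(t)$ is a periodic integral curve of $F_j$, i.e. $\dot{X}_j(t) = F_j(X_j(t))$ for every $t$. Applied at $t=\omega_j^{-1}\phi_j$, this gives the $j$-th component of $\partial_\omega e_0(\phi)$ as $F_j(X_j(\omega_j^{-1}\phi_j))$.

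Finally, I would assemble the components: by the definition \eqref{F0def} of $\mathbf{F}_0$ and the definition \eqref{unperturbedembedding} of $e_0$,
\[
\bigl(F_1(X_1(\omega_1^{-1}\phi_1)),\ldots,F_m(X_m(\omega_m^{-1}\phi_m))\bigr) = \mathbf{F}_0(e_0(\phi)),
\]
which is the desired identity. There is no serious obstacle here; the lemma amounts to unpacking the definitions and applying the chain rule, and its main purpose is to serve as the $\varepsilon=0$ case that motivates the conjugacy ansatz for $(e,\mathbf{f})$ in the perturbed problem.
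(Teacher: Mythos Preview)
Your proof is correct and follows essentially the same approach as the paper: both verify the identity component by component, compute $\left.\frac{d}{ds}\right|_{s=0} X_j(\omega_j^{-1}(\phi_j+s\omega_j)) = \dot X_j(\omega_j^{-1}\phi_j)$ via the chain rule, and then invoke $\dot X_j = F_j \circ X_j$ to identify the result with the $j$-th component of ${\bf F}_0 \circ e_0$.
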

\begin{proof}
Recall from \eqref{unperturbedembedding} that $(e_0)_j(\phi) = X_j(\omega_j^{-1} \phi_j)$, where $X_j$ is a hyperbolic periodic orbit of $F_j$. It follows that 
\begin{align}\nonumber 
(\partial_{\omega} e_0)_j(\phi) & =
\left. \frac{d}{ds} \right|_{s=0} \!\!\! (e_0)_j(\phi + s \omega) =  
\left. \frac{d}{ds} \right|_{s=0}  \!\!\! X_{j}(\omega_j^{-1}(\phi_j + s\omega_j))  \\ \nonumber
& = \dot X_j(\omega_j^{-1} \phi_j) = F_j(X_j(\omega_j^{-1} \phi_j)) = ({\bf F}_0)_j ( (e_0 (\phi))\, ,  
\end{align}
because $\dot X_j(t) = F_j(X_j(t))$ for all $t \in \mathbb{R}$. 
%A curve $\phi(t)\in  (\mathbb{R}/2\pi\mathbb{Z})^m$ is an integral curve of $\omega\in \mathbb{R}^m$ precisely when $\dot \phi_j(t)= \omega_j$ for each $1 \leq j \leq m$. Thus, the image curve $(e_0 \circ \phi)(t) \in \mathbb{R}^M$ satisfies
%\begin{align}\nonumber
%\frac{d}{dt} (e_0 \circ \phi)_j(t) & =  \frac{d}{dt} X_j(\omega_j^{-1}\phi_j(t)) =  \dot X_j(\omega_j^{-1}\phi_j(t))\, \omega_j^{-1}\, \dot \phi_j(t) \\ \nonumber & 
%=  \dot X_j(\omega_j^{-1}\phi_j(t)) = F_j(X_j(\omega_j^{-1}\phi_j(t))) = ({\bf F}_0)_j ( (e_0 \circ \phi)(t))\, ,  \end{align}
%because $\dot X_j(\tau) = F_j(X_j(\tau))$ for all $\tau \in \mathbb{R}$. 
\end{proof}
\noindent Lemma \ref{basiclemma} implies that 
 $e_0$ sends integral curves of the constant vector field $\omega$ on $(\mathbb{R}/2\pi\mathbb{Z})^m$ to integral curves of the vector field ${\bf F}_0$  given in \eqref{F0def}. Because the integral curves of the ODEs $\dot \phi = \omega$ on $(\mathbb{R}/2\pi\mathbb{Z})^m$ are clearly either periodic or quasi-periodic, we  call $\mathbb{T}_0 = e_0((\mathbb{R}/2\pi\mathbb{Z})^m)$ an embedded {\it (quasi-)periodic torus}. 

At this point we temporarily abandon the setting of coupled oscillators and consider a general ODE $\dot x = {\bf F}_0(x)$ defined by a smooth vector field  
 ${\bf F}_0: \mathbb{R}^M\to\mathbb{R}^M$. That is, we do not assume that this ODE decouples into mutually independent ODEs. However, we will assume throughout this paper that ${\bf F}_0$ possesses a normally hyperbolic periodic or quasi-periodic invariant torus $\mathbb{T}_0$ which admits an embedding
$e_0:( \mathbb{R}/2\pi\mathbb{Z})^m \to \mathbb{R}^M$ 
that semi-conjugates the constant vector field $\omega$
on $( \mathbb{R}/2\pi\mathbb{Z})^m$ to ${\bf F}_0$. In other words, we assume that $e_0$ and ${\bf F}_0$ satisfy 
\begin{align}\label{e0G0}
\partial_{\omega}e_0  = {\bf F}_0 \circ e_0\, ,
\end{align}
just as in Lemma \ref{basiclemma}. We  return to coupled oscillator systems in section \ref{Floquetsection}.
%\noindent Here, $\partial_{\omega}e_0$ denotes the derivative of $e_0$ in the direction of $\omega$ as given in \eqref{e0G0first}.
%We shall often write $(\partial_{\omega}f)(\phi) :=Df(\phi)\cdot \omega = \sum_{j=1}^M (\partial_j f)(\phi)\, \cdot \, \omega_j$ for the derivative in the direction $\omega\in \mathbb{R}^M$ of a smooth function $f: (\mathbb{R}/2\pi\mathbb{Z})^M \to E$ taking values in some finite dimensional vector space $E$ (in practice $f$ could for example be real-, complex-, vector- or matrix-valued.)
% (and mapping for instance into $\mathbb{R}^n$ or $\mathbb{R}^n\times \mathbb{R}^n$ or $L(\mathbb{R}^n, \mathbb{R}^n)$, etc. )
%The above conjugacy equations means that the image of $\Gamma$ is an $N$-dimensional periodic or quasi-periodic invariant torus for $G_0$. 

 We now study any smooth perturbation of  ${\bf F}_0$ of the form 
$${\bf F} = {\bf F}(x) = {\bf F}_0(x) + \varepsilon \, {\bf F}_1(x) + \varepsilon^2 \, {\bf F}_2(x) +  \ldots :\ \mathbb{R}^M   \to \mathbb{R}^M\, .$$
%Note that we suppress the dependence of ${\bf F}$ on $\varepsilon$ in our notation. 
F\'enichel's theorem \cite{fenichel1979, wechselberger2020} guarantees that, for $0\leq \varepsilon \ll 1$, the perturbed ODE $\dot x = {\bf F}(x)$ admits an invariant torus $\mathbb{T}_{\varepsilon}$ close to $\mathbb{T}_0$, that depends smoothly on $\varepsilon$.
Our strategy to find  $\mathbb{T}_{\varepsilon}$ will be to search for an embedding 
$e: (\mathbb{R}/2\pi\mathbb{Z})^m \to \mathbb{R}^M$ close to $e_0$, and a reduced vector field ${\bf f}: (\mathbb{R}/2\pi\mathbb{Z})^m \to \mathbb{R}^m$ close to $\omega$ satisfying the 
{\it conjugacy equation}
\begin{align}\label{fullconj}
\mathfrak{C}(e, {\bf f}) := e' \cdot {\bf f} - {\bf F} \circ e = 0  \, .
\end{align}
Any solution $(e, {\bf f})$ to \eqref{fullconj} indeed yields an embedded ${\bf F}$-invariant torus $\mathbb{T}_{\varepsilon}:=e((\mathbb{R}/2\pi\mathbb{Z})^m) \subset \mathbb{R}^M$, as we see from \eqref{fullconj} that at any point $x=e(\phi) \in \mathbb{T}_{\varepsilon}$ the vector ${\bf F}(x)$ lies in the image of the derivative $e'(\phi)$, and is thus tangent to $\mathbb{T}_{\varepsilon}$.  Moreover, $e$ semi-conjugates ${\bf f}$ to ${\bf F}$, that is, ${\bf f}$ is  the restriction of ${\bf F}$ to $\mathbb{T}_{\varepsilon}$ represented in (or ``pulled back to'') the local coordinate chart $(\mathbb{R}/2\pi\mathbb{Z})^m$. %This is why we refer to ${\bf f}$ as the {\it reduced} vector field of ${\bf F}$ or the {\it phase reduction} of ${\bf F}$.

As explained in the introduction, we try to find solutions to \eqref{fullconj} by making a series expansion ansatz
$$e = e_0 + \varepsilon e_1 +  \varepsilon^2 e_2 + \ldots  \ \mbox{and}\ {\bf f} = \omega + \varepsilon {\bf f}_1 +  \varepsilon^2 {\bf f}_2  + \ldots $$ 
%that together satisfy the for $(\phi, \varepsilon) \in \mathbb{T}^N \times (-\varepsilon_0, \varepsilon_0)$. Here $D$ stands for the derivative in the direction of dynamical variable $\phi$ and not the perturbation parameter $\varepsilon$. 
for $e_1, e_2, \ldots : (\mathbb{R}/2\pi\mathbb{Z})^m \to \mathbb{R}^M$ and ${\bf f}_1, {\bf f}_2, \ldots : (\mathbb{R}/2\pi\mathbb{Z})^m \to\mathbb{R}^m$.  
Substitution of this ansatz in   \eqref{fullconj}, and Taylor expansion to $\varepsilon$, yields the following list of recursive equations for the $e_j$ and ${\bf f}_j$:
\begin{align}\label{iterativeeqns}
\begin{array}{ccc}
%&  \partial_{\omega} \Gamma - G_0\circ \Gamma  & =  0  \\ 
 ( \partial_{\omega}  -  {\bf F}_0'\circ e_0 )\cdot e_1 + e_0' \cdot {\bf f}_1   = &  \hspace{-5mm}  {\bf F}_1\circ e_0   &   \hspace{-2mm}   =: {\bf G}_1 \\ 
 ( \partial_{\omega}  -  {\bf F}_0'\circ e_0 )\cdot e_2 + e_0' \cdot {\bf f}_2  = & \hspace{-5mm}   {\bf F}_2\circ e_0 + ({\bf F}_1'\circ e_0)\cdot e_1 \\
  &\hspace{-8mm}   + \frac{1}{2}({\bf F}_0'' \circ e_0)(e_1, e_1)  - e_1'\cdot {\bf f}_1 &     \hspace{-2mm}  =:  {\bf G}_2  \\ 
 \vdots  & \vdots & \vdots \\  
 ( \partial_{\omega}  -  {\bf F}_0'\circ e_0 )\cdot e_j + e_0' \cdot {\bf f}_j =   &  \hspace{0mm}  \ldots  &  \hspace{-2mm}   =:   {\bf G}_j  \\
\vdots &  \vdots & \vdots
\end{array}
\end{align}
Here, each ${\bf G}_j: (\mathbb{R}/2\pi\mathbb{Z})^m \to \mathbb{R}^M$ is an ``inhomogeneous term'' that can iteratively be determined and depends on ${\bf F}_1, \ldots, {\bf F}_{j}, {\bf f}_1, \ldots, {\bf f}_{j-1}$ and $e_1, \ldots, e_{j-1}$. Concretely, ${\bf G}_j$ is given by
\begin{align}\label{Gjformula}
\!\!\!\! {\bf G}_{j} \! :=\!\! \left. \frac{1}{j!}\frac{d^{j}}{d\varepsilon^{j}}\right|_{\varepsilon=0} \!\!\!\!\!\!  \begin{array}{c} \!\!\!  
({\bf F}_0 + \varepsilon {\bf F}_1 + \ldots + \varepsilon^j {\bf F}_j) (e_0+ \varepsilon e_1 + \ldots + \varepsilon^{j-1} e_{j-1}) \hspace{1.3cm}    \\ 
- (e_0 + \varepsilon e_1 \ldots + \varepsilon^{j-1} e_{j-1})' \cdot (\omega + \varepsilon {\bf f}_1 +\ldots + \varepsilon^{j-1} {\bf f}_{j-1})\end{array} \!\!\!\!\!\!  .
\end{align}
 Explicit formulas for ${\bf G}_1$ and ${\bf G}_2$ are given in \eqref{iterativeeqns}. Note that equations \eqref{iterativeeqns} are all of the form
\begin{align}\label{formulalinearisationC}
\mathfrak{c}(e_j, {\bf f}_j) = {\bf G}_j \ \mbox{for}\ j = 1,2, \ldots \, ,
\end{align}
in which 
\begin{align}\label{formulalinearisationCexplicit}
\mathfrak{c}(e_j, {\bf f}_j) :=  ( \partial_{\omega}  -  {\bf F}_0'\circ e_0 )\cdot e_j+ e_0' \cdot {\bf f}_j \, 
\end{align}
is the linearisation of the operator $\mathfrak{C}$ defined in \eqref{fullconj} at the point $(e, {\bf f}) = (e_0, \omega)$,  where $\varepsilon=0$. This linearisation $\mathfrak{c}$ is not invertible, but we will see that $\mathfrak{c}$ is surjective under the assumption that  $\mathbb{T}_0$ is reducible. This implies that  equations \eqref{iterativeeqns} can iteratively be solved. %The condition concerns the linearisation ${\bf F}_0' \circ e_0$ appearing in \eqref{formulalinearisationCexplicit}, i.e., the linearisation of ${\bf F}_0$ along the unperturbed torus $\mathbb{T}_0$.

\begin{remk}
We  think of $\mathfrak{C}$ and $\mathfrak{c}$ as  operators between function spaces. For example, for ${\bf F}_0\in C^{r+1}(\mathbb{R}^M, \mathbb{R}^M), {\bf F}\in C^{r}(\mathbb{R}^M, \mathbb{R}^M)$, and $e_0\in C^{r+1}((\mathbb{R}/2\pi\mathbb{Z})^m, \mathbb{R}^M)$,  
$$\mathfrak{C}, \mathfrak{c}: C^{r+1}((\mathbb{R}/2\pi\mathbb{Z})^m, \mathbb{R}^M) \times C^{r}((\mathbb{R}/2\pi\mathbb{Z})^m, \mathbb{R}^m) \to C^{r}((\mathbb{R}/2\pi\mathbb{Z})^m, \mathbb{R}^M)\, .$$
%and when 
%$$\mathfrak{c}: C^{r+1}((\mathbb{R}/2\pi\mathbb{Z})^m, \mathbb{R}^M) \times C^{r}((\mathbb{R}/2\pi\mathbb{Z})^m, \mathbb{R}^m) \to C^{r}((\mathbb{R}/2\pi\mathbb{Z})^m, \mathbb{R}^M)\, .
%Of course one can consider other function spaces as well. 
\end{remk}

\begin{remk}\label{rem:nonunique}
The solutions to equation \eqref{fullconj} are not unique because an invariant torus can be embedded in many different ways. In fact, if $e: (\mathbb{R}/2\pi\mathbb{Z})^m \to \mathbb{R}^M$ is an embedding of $\mathbb{T}_{\varepsilon}$ and $\Psi: (\mathbb{R}/2\pi\mathbb{Z})^m \to (\mathbb{R}/2\pi\mathbb{Z})^m$ is any diffeomorphism of the standard torus, then also $e\circ \Psi$ is an embedding of $\mathbb{T}_{\varepsilon}$. The operator $\mathfrak{C}$ defined in \eqref{fullconj} is thus equivariant under the group of diffeomorphisms of $(\mathbb{R}/2\pi\mathbb{Z})^m$. As a consequence, solutions of \eqref{formulalinearisationC} are  not unique either. 

\end{remk}

\begin{remk}\label{rem:nonunique2}
 For the interested reader we provide additional details on Remark~\ref{rem:nonunique}.  Let us denote by $\Psi^*{\bf{f}}$ the pullback of the vector field ${\bf{f}}$ by $\Psi$ defined by the formula 
 $(\Psi^*{\bf{f}})(\phi) := (\Psi'(\phi))^{-1}\cdot {\bf f}(\Psi(\phi))$ for all $\phi \in (\mathbb{R}/2\pi\mathbb{Z})^m$. 
We claim that 
\begin{align}
\mathfrak{C}(e\circ \Psi, \Psi^*{\bf{f}}) = \mathfrak{C}(e, f)\circ \Psi\, .
\end{align}
This follows from a straightforward calculation. Indeed,  
\begin{align}
\mathfrak{C}(e\circ \Psi, \Psi^*{\bf{f}})(\phi) &= e'(\Psi(\phi))\cdot\Psi'(\phi)\cdot(\Psi'(\phi))^{-1}\cdot{\bf f}(\Psi(\phi)) - {\bf F}((e \circ \Psi)(\phi)) \nonumber \\ \nonumber
&= e'(\Psi(\phi))\cdot{\bf f}(\Psi(\phi)) - ({\bf F} \circ e)(\Psi(\phi)) = \mathfrak{C}(e, f)(\Psi(\phi))\, .
\end{align}
As we may view vector fields as infinitesimal diffeomorphisms, this allows us to find many elements in the kernel of $\mathfrak{c}$. Namely, if $X$ is any vector field on $(\mathbb{R}/2\pi\mathbb{Z})^m$ with corresponding flow $\varphi_t$, then 
\begin{align}\label{fancykernelformula}
\left.\frac{d}{dt}\right|_{t=0}\hspace{-12pt}(e_0 \circ \varphi_t, \varphi_t^* \omega) = (e_0' \cdot X, [X, \omega]) \in \ker \mathfrak{c}.
\end{align}
Here $ [X, \omega] = -X'\cdot\omega = -\partial_\omega X$ denotes the Lie bracket between $X$ and $\omega$.

Formula \eqref{fancykernelformula} may also be  verified directly. Differentiating the identity
\begin{align}
\mathfrak{C}(e_0, \omega)(\phi) = e_0'(\phi) \cdot \omega - ({\bf F}_0 \circ e_0)(\phi) = 0
\end{align}
at any $\phi$, in the direction of any vector $u$, we first of all find that
\begin{align}\label{Eremk1}
e_0''(\phi) (\omega, u) - ({\bf F}_0' \circ e_0)(\phi) \cdot e_0'(\phi) \cdot u = 0\, .
\end{align}
From this we see that indeed
\begin{align} \nonumber
\mathfrak{c}(e_0' \cdot X, [X, \omega])   &=   (\partial_\omega - {\bf F}_0' \circ e_0)\cdot e_0' \cdot X - e'_0 \cdot \partial_\omega X \\ \nonumber
&=   e_0'' (\omega, X) + e_0' \cdot \partial_\omega X  - ({\bf F}_0' \circ e_0) \cdot e_0' \cdot X - e'_0 \cdot \partial_\omega X \\ \nonumber
&=   e_0'' (\omega, X)  - ({\bf F}_0' \circ e_0) \cdot e_0' \cdot X = 0 \, ,
\end{align}
where the last step follows from equation \eqref{Eremk1}. 

%Choose for example a constant vector  $X \in \mathbb{R}^N$. If $\Gamma: \mathbb{T}^N\to\mathbb{R}^n$ is an embedding of $\mathcal{T}_0$ then so is its translation $\phi \mapsto \Gamma(\phi + \varepsilon X)$. We thus have that 
%$$(\partial_{\omega}\Gamma)(\phi + \varepsilon X) = \Omega(\Gamma(\phi+\varepsilon X))\, $$
%for all $\varepsilon \in \mathbb{R}$. Differentiation of this identity to $\varepsilon$ gives that
%$$ \partial_{\omega}(D\Gamma(\phi)\cdot X) = D(\partial_{\omega}\Gamma(\phi))\cdot X = D\Omega(\Gamma(\phi))\cdot D\Gamma(\phi) \cdot X\, .$$
%In other words, every $e_i: \mathbb{T}^N\to \mathbb{R}^n$ of the form $e_i(\phi) = D\Gamma(\phi) \cdot X$, for some constant $X\in \mathbb{R}^N$, lies in the kernel of $\partial_{\omega} - D\Omega \circ \Gamma$. As a result, the solutions to the iterative equations \eqref{iterativeeqns} are not unique. 
%This somewhat complicates solving these equations.
\end{remk}

\section{Reducibility and the fast fibre map} \label{sec:parametrisationsection} 
As was indicated in Remarks \ref{rem:nonunique} and \ref{rem:nonunique2}, the solutions to the iterative equations $\mathfrak{c}(e_j, {\bf f}_j) = {\bf G}_j $ are not unique. However, we show in section \ref{solutionsection}  that solutions can be found if we assume that the unperturbed  torus $\mathbb{T}_0$ is  reducible. We define this concept by means of a parametrisation of the linearised dynamics of ${\bf F}_0$ normal to $\mathbb{T}_0$. 
But we start with the observation that the linearised dynamics tangent to $\mathbb{T}_0$ is trivial.
%We begin this section with a definition:
%\begin{defi} \label{def:variationalfield}
%Let $U\subset \mathbb{R}^k$ be an open subset and $G:U\to\mathbb{R}^m$ a smooth map. The {\it tangent map} ${\bf T}G:  U \times\mathbb{R}^k \to \mathbb{R}^m\times\mathbb{R}^m$ of $G$ is defined as
%$${\bf T}G(x,v) := (G(x), DG(x)\cdot v)\, .$$
%In the case that $k=m$ we also call ${\bf T}G: U \times\mathbb{R}^k \to \mathbb{R}^k\times\mathbb{R}^k$ the {\it variational vector field} of $G$.
%\end{defi}
%\noindent We will use the same terminology when the domain $U = \mathbb{T}^N$ of $G$ is a torus. 
Recall that if $e_0: (\mathbb{R}/2\pi\mathbb{Z})^m \to \mathbb{R}^M$ is an embedding of $\mathbb{T}_0 \subset \mathbb{R}^M$, then the {\it tangent map}
${\bf T}e_0 :(\mathbb{R}/2\pi\mathbb{Z})^m \times \mathbb{R}^m \to \mathbb{R}^M\times \mathbb{R}^M$ defined by 
\begin{align}\label{def:Te0}
{\bf T}e_0(\phi, u) = (e_0(\phi), e_0'(\phi)\cdot u)
\end{align}
is an embedding as well. Its image is the tangent bundle 
${\bf T}\mathbb{T}_0 \subset \mathbb{R}^M \times \mathbb{R}^M$. 
 %Tangent maps and variational vector fields appear in the following lemma. %Recall that the image of the injective linear map $D\Gamma(\phi): \mathbb{R}^N\to\mathbb{R}^n$ is the tangent space $T_{e_{0}(\phi)} e_{0}(\mathbb{T}^N)$. 
 
\begin{lem} \label{tangentembedding}
Assume that the embedding $e_0: (\mathbb{R}/2\pi\mathbb{Z})^m \to\mathbb{R}^M$ semi-conjugates the constant vector field $\omega \in \mathbb{R}^m$ on $(\mathbb{R}/2\pi\mathbb{Z})^m$ to the vector field ${\bf F}_0$ on $\mathbb{R}^M$. 
Then ${\bf T}e_0$ sends solution curves of the system of ODEs 
$$\dot \phi = \omega\, , \ \dot u = 0\ \mbox{on} \ (\mathbb{R}/2\pi\mathbb{Z})^m \times \mathbb{R}^m$$  to integral curves of the tangent vector field ${\bf T}{\bf F}_0$ on $\mathbb{R}^M\times\mathbb{R}^M$ defined by 
\begin{align} \label{def:TF0}
{\bf T}{\bf F}_0(x,v) := ({\bf F}_0(x), {\bf F}_0'(x)\cdot v)\, .
\end{align}
\end{lem}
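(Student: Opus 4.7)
My plan is to reduce the claim to verifying the infinitesimal semi-conjugacy
\[
({\bf T}e_0)'(\phi, u) \cdot (\omega, 0) = {\bf T}{\bf F}_0\bigl({\bf T}e_0(\phi, u)\bigr)
\]
for every $(\phi, u) \in (\mathbb{R}/2\pi\mathbb{Z})^m \times \mathbb{R}^m$. Once this is in hand, the conclusion follows exactly as in Lemma \ref{basiclemma}: if $(\phi(t), u(t)) = (\phi_0 + t\omega, u_0)$ is an integral curve of $\dot\phi = \omega$, $\dot u = 0$, then the chain rule gives
\[
\tfrac{d}{dt}\,{\bf T}e_0(\phi(t), u(t)) = ({\bf T}e_0)'(\phi(t), u(t))\cdot(\omega, 0) = {\bf T}{\bf F}_0({\bf T}e_0(\phi(t), u(t))),
\]
so ${\bf T}e_0(\phi(t), u(t))$ is an integral curve of ${\bf T}{\bf F}_0$.

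Next I would check the required identity componentwise using the definitions \eqref{def:Te0} and \eqref{def:TF0}. The first component reads $e_0'(\phi)\cdot\omega = {\bf F}_0(e_0(\phi))$, which is precisely the standing hypothesis \eqref{e0G0}. The second component asks for
\[
e_0''(\phi)(\omega, u) = {\bf F}_0'(e_0(\phi)) \cdot e_0'(\phi) \cdot u,
\]
and this is obtained by differentiating the hypothesis \eqref{e0G0} at $\phi$ in the direction $u$ and applying the chain rule on the right-hand side. This is exactly the identity \eqref{Eremk1} that was already recorded in Remark \ref{rem:nonunique2}.

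I do not anticipate a genuine obstacle: the whole content is the chain rule applied to the semi-conjugacy \eqref{e0G0}. The only subtlety worth being explicit about is that the second directional derivative $e_0''(\phi)(\omega, u)$ arises as the $\omega$-derivative of $\phi \mapsto e_0'(\phi)\cdot u$; this equals the $u$-derivative of $\phi \mapsto e_0'(\phi)\cdot \omega = \partial_\omega e_0(\phi)$ by symmetry of second partial derivatives, which is what lets us differentiate \eqref{e0G0} to obtain the required identity. With that observation the proof is essentially a two-line verification.
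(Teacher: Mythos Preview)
Your proposal is correct and follows essentially the same route as the paper: both verify the infinitesimal semi-conjugacy $({\bf T}e_0)'(\phi,u)\cdot(\omega,0) = {\bf T}{\bf F}_0({\bf T}e_0(\phi,u))$ componentwise, invoking the hypothesis \eqref{e0G0} for the first component and its derivative \eqref{Eremk1} for the second. The paper carries out the computation of $({\bf T}e_0)'(\phi,u)\cdot(\omega,0)$ a bit more explicitly via $\left.\frac{d}{ds}\right|_{s=0}{\bf T}e_0(\phi+s\omega,u)$, but the content is identical.
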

\begin{proof}
Our assumption simply means that 
$\partial_{\omega} e_0 = {\bf F}_0 \circ e_0$. As we already observed in \eqref{Eremk1}, differentiation of this identity at a point $\phi \in (\mathbb{R}/2\pi\mathbb{Z})^m$ in the direction of a vector $u \in \mathbb{R}^m$ yields that  
\begin{align}\nonumber %\label{derivativeconjugacy}
 e_0''(\phi)  (u, 
 \omega) = {\bf F}_0'(e_0(\phi))\cdot e_0'(\phi) \cdot u\, .
\end{align}
From this it follows that 
\begin{align}\nonumber 
 ({\bf T}e_0)'(\phi,u)\cdot ( \omega, 0)  = &  \left. \frac{d}{ds}\right|_{s=0} \!\!\! \!\! {\bf T}e_0(\phi+s \omega, u) \\ \nonumber 
 = & \left. \frac{d}{ds}\right|_{s=0} \!\!\!\!\! \left( e_0(\phi+s\omega), e_0'( \phi+s\omega) \cdot u \right)  \\
\nonumber 
 = &  \left( (\partial_{\omega} e_0)(\phi),  e_0''(\phi) (u, 
 \omega)  \right)   
\\
 \nonumber
= & \left( {\bf F}_0(e_0(\phi)), {\bf F}_0'(e_0(\phi)) \cdot e_0'(\phi)\cdot u \right)   
=  {\bf T}{\bf F}_0 ( {\bf T}e_0(\phi, u))\, .
\end{align}
In the last equality we used Definitions \eqref{def:Te0} and \eqref{def:TF0}. %This proves the lemma.
\end{proof}
%In other words, the map
%$$(\phi, w)\mapsto (\Gamma(\phi), D\Gamma(\phi) \cdot w)$$
%from $(\mathbb{R}/\mathbb{Z})^N \times \mathbb{R}^N$ to $\mathbb{R}^n\times \mathbb{R}^n$  
% conjugates the vector field
%$(\omega, 0) $
%to the  vector field $(\Omega(x), D\Omega(x)\cdot v)$.
%\begin{remk}
\noindent Lemma \ref{tangentembedding} shows that ${\bf T}e_0$ trivialises the linearised dynamics of ${\bf F}_0$ in the direction tangent to $\mathbb{T}_0$. In what follows, we assume that something similar  happens in the direction normal to $\mathbb{T}_0$, that is, we assume that $\mathbb{T}_0$ is reducible. We define this concept now.

\begin{defi}\label{reducibledefi}
Assume that the embedding $e_0: (\mathbb{R}/2\pi\mathbb{Z})^m \to\mathbb{R}^M$ semi-conjugates the constant vector field $\omega \in \mathbb{R}^m$ on $(\mathbb{R}/2\pi\mathbb{Z})^m$ to the vector field ${\bf F}_0$ on $\mathbb{R}^M$. 
We say that the (quasi-)periodic invariant torus $\mathbb{T}_0 = e_0( (\mathbb{R}/2\pi\mathbb{Z})^m)$ is {\it reducible} if there is a map 
 ${\bf N}e_0 \! : \!(\mathbb{R}/2\pi\mathbb{Z})^m \! \times \! \mathbb{R}^{M-m} \to\mathbb{R}^M \! \times \! \mathbb{R}^M$ of the form 
 \begin{align}
\label{NGammadefgeneral}
{\bf N}e_0(\phi, u) := (e_0(\phi), N(\phi)\cdot u) \,  , 
\end{align}
for a smooth family of  linear maps
$$N: (\mathbb{R}/2\pi\mathbb{Z})^m \to \mathcal{L}(\mathbb{R}^{M-m}, \mathbb{R}^M)\, , $$
with the following two properties:
\begin{itemize}
\item[{\it i)}]  
  ${\bf N}e_0$ is transverse to ${\bf T}e_0$. 
 By this we mean that 
\begin{align}\label{transversality}
\mathbb{R}^M = {\rm im}\, e_0'(\phi) \oplus {\rm im}\, N(\phi)\ \mbox{for every}\ \phi \in (\mathbb{R}/2\pi\mathbb{Z})^m\, .
\end{align}
In particular, every $N(\phi)$ is injective.
\item[{\it ii)}]  There is a linear map $L: \mathbb{R}^{M-m} \to  \mathbb{R}^{M-m}$ such that ${\bf N}e_0$
sends solution curves of the system of ODEs
$$\dot \phi = \omega\, ,\, \dot u = L \cdot u \ \mbox{defined on}\ (\mathbb{R}/2\pi\mathbb{Z})^m \times \mathbb{R}^{M-m}$$
to integral curves of the tangent vector field ${\bf T}{\bf F}_0$ on $\mathbb{R}^M\times \mathbb{R}^M$.
\end{itemize}
When $\mathbb{T}_0$ is reducible, the matrix $L$ is called a {\it Floquet matrix} for $\mathbb{T}_0$, and its eigenvalues  the {\it Floquet exponents} of $\mathbb{T}_0$. 

If $L$ is hyperbolic (no Floquet exponents lie on the imaginary axis) then $\mathbb{T}_0$ is normally hyperbolic, and we call ${\bf N}e_0$ a {\it fast fibre map} for $\mathbb{T}_0$. Its image $${\bf N}\mathbb{T}_0 := {\bf N}e_0 ((\mathbb{R}/2\pi\mathbb{Z})^m \times \mathbb{R}^{M-m}) \subset \mathbb{R}^M\times\mathbb{R}^M\, $$
is then called the {\it fast fibre bundle} of $\mathbb{T}_0$.
\end{defi}
\noindent We note that the map ${\bf N}e_0$ appearing in Definition \ref{reducibledefi} is an embedding because $e_0$ is an embedding and the linear maps $N(\phi)$ are all injective. Therefore its image  ${\bf N}\mathbb{T}_0$  
 is a smooth $M$-dimensional manifold.  Condition {\it i)} ensures that ${\bf N}\mathbb{T}_0$ is in fact a normal bundle for $\mathbb{T}_0$.   %Note  that ${\bf T}\mathcal{T}_0$ and ${\bf N}\mathcal{T}_0$ intersect precisely in $\mathcal{T}_0\times \{0\}$.
%Our parametrisation method assumes the existence of a special normal framing that turns the linearised normal dynamics into a particularly simple skew product as follows. 
%\noindent When the matrices $N(\phi)$ that determine the normal framing ${\bf N}\Gamma$ through \eqref{NGammadef} satisfy \eqref{N0M0} for a family of ${\it invertible}$ matrices $M=M(\phi)$, then we call  ${\bf N}\mathcal{T}_0  = {\bf N}\Gamma(\mathbb{T}^N \times \mathbb{R}^{n-N})$ the 
% {\it fast fibre bundle} of $\mathcal{T}_0$. From now on, we shall make the  assumption that ${\bf N}\Gamma$ has this property. However, for practical computations we will need to assume something more.
%We also observe that a (quasi-)periodic reducible invariant torus $\mathbb{T}_0$ is normally hyperbolic if and only if it admits a hyperbolic Floquet matrix.
 %This is clear from formula \eqref{trivialnormaldynamics}.
 
We finish this section with an alternative characterisation of property $\it ii)$ in Definition \ref{reducibledefi}. 
\begin{lem}\label{variationalparametrisation}
Assume that the embedding $e_0:  (\mathbb{R}/2\pi\mathbb{Z})^m \to\mathbb{R}^M$ semi-conjugates the constant vector field $\omega$  to the vector field ${\bf F}_0$. Let $L: \mathbb{R}^{M-m} \to  \mathbb{R}^{M-m}$ be a linear map, and 
 let ${\bf N}e_0$ be a map of the form \eqref{NGammadefgeneral} for a smooth family of linear maps $N: (\mathbb{R}/2\pi\mathbb{Z})^m \to \mathcal{L}(\mathbb{R}^{M-m}, \mathbb{R}^M)$.  
%Let $m\in \mathbb{N}$ and 
The following are equivalent:
\begin{itemize}
\item[{\it i)}] ${\bf N}e_0$ sends solution curves of the system of ODEs
$$\dot \phi = \omega\, ,\, \dot u = L \cdot u \ \mbox{defined on}\ (\mathbb{R}/2\pi\mathbb{Z})^m \times \mathbb{R}^{M-m}$$
to integral curves of the tangent vector field ${\bf T}{\bf F}_0$ on $\mathbb{R}^M\times \mathbb{R}^M$;
\item[{\it ii)}]  $N=N(\phi)$ satisfies the partial differential equation
\begin{align}\label{N0M0}
\partial_{\omega} N + N \! \cdot \! L= ({\bf F}_0' \circ e_0) \! \cdot \! N\ \mbox{on}\ (\mathbb{R}/2\pi\mathbb{Z})^m\, .
\end{align}
\end{itemize} 
  \end{lem}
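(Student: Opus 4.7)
The strategy is essentially the same as in the proof of Lemma \ref{tangentembedding}: compute the derivative of ${\bf N}e_0$ along a solution curve of the linear system on $(\mathbb{R}/2\pi\mathbb{Z})^m \times \mathbb{R}^{M-m}$, and compare the result with ${\bf T}{\bf F}_0 \circ {\bf N}e_0$. Condition {\it i)} asserts that these agree for every initial condition, whereas condition {\it ii)} is the pointwise identity on $(\mathbb{R}/2\pi\mathbb{Z})^m$, and the  equivalence will fall out of the fact that linearity in $u$ allows us to strip off $u$.

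Concretely, I will first observe that for any $(\phi, u) \in (\mathbb{R}/2\pi\mathbb{Z})^m \times \mathbb{R}^{M-m}$, the curve $s \mapsto (\phi + s\omega, e^{sL}u)$ is the solution of $\dot\phi = \omega$, $\dot u = L\cdot u$ through $(\phi, u)$. Differentiating ${\bf N}e_0$ along this curve at $s=0$ and using the definition \eqref{NGammadefgeneral}, together with $\frac{d}{ds}|_{s=0} N(\phi + s\omega) = (\partial_\omega N)(\phi)$, yields
\begin{align}\nonumber
\left.\frac{d}{ds}\right|_{s=0}\!\!\! {\bf N}e_0(\phi + s\omega, e^{sL}u) = \bigl((\partial_\omega e_0)(\phi),\, (\partial_\omega N)(\phi)\cdot u + N(\phi)\cdot L \cdot u\bigr)\, .
\end{align}
On the other hand, by \eqref{def:TF0} and \eqref{NGammadefgeneral},
\begin{align}\nonumber
{\bf T}{\bf F}_0\bigl({\bf N}e_0(\phi, u)\bigr) = \bigl({\bf F}_0(e_0(\phi)),\, {\bf F}_0'(e_0(\phi))\cdot N(\phi)\cdot u\bigr)\, .
\end{align}

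Next I will note that the first components of these two vectors coincide for every $\phi$ thanks to the hypothesis $\partial_\omega e_0 = {\bf F}_0 \circ e_0$. Thus condition {\it i)}---which asks that  ${\bf N}e_0$ send every such solution curve to an integral curve of ${\bf T}{\bf F}_0$, equivalently (by uniqueness of solutions) that the $s=0$ derivative agree with ${\bf T}{\bf F}_0 \circ {\bf N}e_0$ at every $(\phi, u)$---reduces to the identity
\begin{align}\nonumber
\bigl((\partial_\omega N)(\phi) + N(\phi) \cdot L\bigr)\cdot u = {\bf F}_0'(e_0(\phi)) \cdot N(\phi) \cdot u \quad \mbox{for all}\ (\phi, u)\, .
\end{align}
Since this equation is linear in $u$ and must hold for all $u \in \mathbb{R}^{M-m}$, it is equivalent to the operator identity $\partial_\omega N + N\cdot L = ({\bf F}_0'\circ e_0)\cdot N$ on $(\mathbb{R}/2\pi\mathbb{Z})^m$, which is condition {\it ii)}. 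This establishes the equivalence in both directions simultaneously.

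There is no real obstacle here; the only thing to be careful about is the translation between ``sends solution curves to integral curves'' and the infinitesimal statement at $s=0$. For this I will invoke uniqueness of solutions of ODEs: checking the tangency condition ${\bf N}e_0'\cdot(\omega, Lu) = {\bf T}{\bf F}_0 \circ {\bf N}e_0$ at every point of the domain is equivalent to the semi-conjugacy of flows, because the flow of $(\omega, L)$ reaches every point of $(\mathbb{R}/2\pi\mathbb{Z})^m \times \mathbb{R}^{M-m}$.
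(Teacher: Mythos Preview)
Your proof is correct and follows essentially the same approach as the paper: compute $({\bf N}e_0)'(\phi,u)\cdot(\omega,L\cdot u)$ and ${\bf T}{\bf F}_0({\bf N}e_0(\phi,u))$, note that the first components agree by the semi-conjugacy hypothesis, and conclude by comparing the second components. The only cosmetic difference is that the paper differentiates along the linearised curve $s\mapsto(\phi+s\omega,\,u+sL\cdot u)$ rather than the full flow $s\mapsto(\phi+s\omega,\,e^{sL}u)$, which of course gives the same derivative at $s=0$.
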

\begin{proof}
%The assumption that $N_0(\phi)$ is transversal to $D\Gamma(\phi)$ implies that $N_0(\phi)$ is an injective linear map. In turn, this clearly implies that $\Gamma$ is an embedding. 

It holds that
\begin{align}
({\bf N}e_0)'(\phi, u) \cdot (\omega, L\cdot u ) & = \left. \frac{d}{ds}\right|_{s = 0} \!\!\!\!\! \left(e_0(\phi + s \omega), N(\phi + s \omega) \cdot (u+s L\cdot u ) \right)
\nonumber  \\ \nonumber 
 & = ((\partial_{\omega} e_0)(\phi), \partial_{\omega} N(\phi) \cdot u + N(\phi)\cdot L\cdot u)   \, .
 \end{align} 
 At the same time,
 $${\bf T}{\bf F}_0({\bf N}e_0(\phi, u)) = ({\bf F}_0(e_0(\phi)), {\bf F}_0'(e_0(\phi))\cdot N(\phi)\cdot u)\, .$$
 It holds that  
 $\partial_{\omega} e_0 = {\bf F}_0\circ e_0$ by assumption, so  the first components of these two expressions are equal. The conclusion of the lemma therefore follows from comparing  the second components.
\end{proof}
\begin{remk}
Reducibility of a (quasi-)periodic invariant torus of an arbitrary vector field ${\bf F}_0$ can only be quaranteed under strong conditions, e.g., that ${\bf F}_0$ is Hamiltonian \cite{KAMwithout},  or that the frequency vector $\omega$ satisfies certain Diophantine inequalities  \cite{JohnsonSell}. We do not assume such conditions here. Even the question whether reducibility is preserved under perturbation is subtle \cite{JorbaSimo}. 

However, hyperbolic periodic orbits (which are one-dimensional normally hyperbolic invariant tori) are always reducible (at least if we allow the matrix $L$ to be complex, see Section \ref{Floquetsection}). This relatively well-known fact is a consequence of Floquet's theorem \cite{floquet}, as  we show in Theorem \ref{floquetreducible}. The (quasi-)periodic  torus occurring in an uncoupled oscillator system such as \eqref{coupledoscillators} is a product of hyperbolic periodic orbits, and is therefore reducible as well, see Lemma \ref{productreduciblelemma}.  
\end{remk}

 %In Section \ref{Liegroupsection} we shall also prove reducibility for tori that are relative equilibria of systems with $\mathbb{T}^N$-symmetry.  %, and moreover that $M_0$ is hyperbolic, i.e., that the embedded torus is normally hyperbolic. 
%Our assumption means that we have
%$$ \partial_{\omega} N_0(\phi) +N_0(\phi) M_0(\phi)= DG_0(\varepsilon(\phi))\cdot N_0(\phi)\, .$$

%\begin{remk}
%A full  parametrisation of the variational equations around the invariant torus exists if we can find $L(\phi), M(\phi)$ so that
%$$(\phi, w)\mapsto (\Gamma(\phi), L(\phi)\cdot w) $$
%conjugates 
%$$(\omega, M(\phi)\cdot w)$$
%to $TG_0$. This means that 
%$$\partial_{\omega}L(\phi) + L(\phi)M(\phi) = (DG_0\circ \Gamma)\cdot L(\phi)\, .$$
%We can apply this to $M=(0, M_0)$ and $L= (D\Gamma, N_0)$. 
%\end{remk}

\section{Solving the iterative equations} \label{solutionsection}
We now return to solving the iterative  equations \eqref{iterativeeqns}, assuming from here on out that $\mathbb{T}_0$ is an embedded (quasi-)periodic reducible and normally hyperbolic invariant torus for ${\bf F}_0$. The main result of this section can be summarised (at this point still somewhat imprecisely) as follows.
\begin{thr}\label{sloppythm}
Assume that $\mathbb{T}_0 = e_0((\mathbb{R}/2\pi\mathbb{Z})^m) \subset \mathbb{R}^M$ is a smooth embedded (quasi-)periodic reducible normally hyperbolic invariant torus for ${\bf F}_0$. Then 
\begin{itemize}
\item[{\it i)}] there are smooth solutions $(e_j, {\bf f}_j)$ to the iterative equations $\mathfrak{c}(e_j, {\bf f}_j)={\bf G}_j$ for every $j\in \mathbb{N}$, for which we provide explicit formulas in this section;
\item[{\it ii)}] the component of each $e_j$ tangential to $\mathbb{T}_0$ can be chosen freely, but every such choice for $e_1, \ldots, e_{j-1}$ uniquely determines the component of $e_j$ normal to $\mathbb{T}_0$ (see Theorem \ref{solutionlemma});
\item[{\it iii)}] the tangential component of $e_j$ can be chosen in such a way that ${\bf f}_j$ is in ``normal form'' to arbitrarily high order in its Fourier expansion. We say that ${\bf f}_j$ is in normal form if it is a sum of  ``resonant terms'' only (see Corollary \ref{normalformcorollary}).
\end{itemize} 
\end{thr}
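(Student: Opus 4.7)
The plan is to use the fast fibre map ${\bf N}e_0$ to split every map $(\mathbb{R}/2\pi\mathbb{Z})^m \to \mathbb{R}^M$ into a tangential and a normal component, and thereby decouple the iterative equation into two scalar-type equations which can be solved separately. By the transversality condition \eqref{transversality}, the columns of $e_0'(\phi)$ and those of $N(\phi)$ form a basis of $\mathbb{R}^M$ at every $\phi$, so we can write uniquely
\begin{align*}
e_j(\phi) = e_0'(\phi) \cdot a_j(\phi) + N(\phi)\cdot b_j(\phi), \quad {\bf G}_j(\phi) = e_0'(\phi) \cdot G_j^T(\phi) + N(\phi)\cdot G_j^N(\phi),
\end{align*}
with $a_j, G_j^T: (\mathbb{R}/2\pi\mathbb{Z})^m \to \mathbb{R}^m$ and $b_j, G_j^N: (\mathbb{R}/2\pi\mathbb{Z})^m \to \mathbb{R}^{M-m}$ smooth.

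Next I would compute $\mathfrak{c}(e_j, {\bf f}_j)$ in this frame. Differentiating the identity $\partial_\omega e_0 = {\bf F}_0 \circ e_0$ gives \eqref{Eremk1}, and a direct calculation yields
\begin{align*}
(\partial_\omega - {\bf F}_0' \circ e_0)\cdot (e_0' \cdot a_j) = e_0' \cdot \partial_\omega a_j;
\end{align*}
similarly, the reducibility equation \eqref{N0M0} gives
\begin{align*}
(\partial_\omega - {\bf F}_0' \circ e_0)\cdot (N \cdot b_j) = N\cdot(\partial_\omega - L)\cdot b_j.
\end{align*}
Because ${\bf f}_j$ contributes only to the tangential part (it is multiplied by $e_0'$), equating tangential and normal components of $\mathfrak{c}(e_j, {\bf f}_j) = {\bf G}_j$ and invoking transversality decouples the iterative equation into
\begin{align*}
\partial_\omega a_j + {\bf f}_j = G_j^T \quad \text{(tangential)}, \qquad (\partial_\omega - L)\cdot b_j = G_j^N \quad \text{(normal)}.
\end{align*}

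I would then solve each piece. For the normal equation, Fourier-expand $G_j^N(\phi) = \sum_{k \in \mathbb{Z}^m} \hat G_j^N(k) e^{i\langle k, \phi\rangle}$; the equation becomes $(i\langle k, \omega\rangle I - L)\hat b_j(k) = \hat G_j^N(k)$ for each $k$. Since $L$ is hyperbolic its spectrum avoids the imaginary axis, so $\|(i\lambda I - L)^{-1}\| \leq C$ uniformly in $\lambda \in \mathbb{R}$, and $\hat b_j(k)$ is uniquely determined with $\|\hat b_j(k)\| \leq C \|\hat G_j^N(k)\|$. This gives a unique smooth $b_j$ with the same regularity as $G_j^N$, and establishes (i) and (ii): once $(e_1, {\bf f}_1), \ldots, (e_{j-1}, {\bf f}_{j-1})$ are fixed (through which ${\bf G}_j$ is determined by \eqref{Gjformula}), the normal component of $e_j$ is forced, while $a_j$ remains free and ${\bf f}_j = G_j^T - \partial_\omega a_j$ is determined by $a_j$. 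For (iii), I would Fourier-expand $G_j^T$ and exploit that $\partial_\omega$ acts on the mode $e^{i\langle k,\phi\rangle}$ as multiplication by $i\langle k, \omega\rangle$: fixing any cutoff $K \in \mathbb{N}$ and setting
\begin{align*}
\hat a_j(k) := \frac{\hat G_j^T(k)}{i\langle k, \omega\rangle} \ \text{ for } 0<|k|\leq K \text{ with } \langle k, \omega\rangle \neq 0, \qquad \hat a_j(k) := 0 \text{ otherwise},
\end{align*}
kills every non-resonant Fourier mode of ${\bf f}_j$ with $|k|\leq K$, leaving only resonant modes ($\langle k, \omega\rangle = 0$) up to order $K$. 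The main obstacle I anticipate is ensuring smoothness of $b_j$ from its Fourier series, but this reduces entirely to the uniform resolvent bound for $L$, which is precisely the payoff of normal hyperbolicity; by the same token, small divisors are absent in the normal equation, and in the tangential equation they are circumvented by truncating at a finite Fourier order $K$, which is consistent with the "arbitrarily high order" formulation in (iii).
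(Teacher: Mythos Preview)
Your proposal is correct and follows essentially the same approach as the paper: the paper makes the identical ansatz $e_j = e_0'\cdot {\bf g}_j + N\cdot {\bf h}_j$ (your $a_j,b_j$), uses \eqref{Eremk1} and \eqref{N0M0} in exactly the way you indicate to reduce $\mathfrak{c}(e_j,{\bf f}_j)={\bf G}_j$ to the decoupled tangential and normal homological equations, and then solves these by the same Fourier arguments (hyperbolicity of $L$ for the normal part, the finite-$K$ choice of $\hat a_j(k)$ for the normal form). The only cosmetic difference is that the paper phrases the decomposition of ${\bf G}_j$ via the projection $\pi$ and Moore--Penrose pseudo-inverses rather than writing $G_j^T,G_j^N$ directly.
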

\noindent The precise meaning of the statements in this theorem will be made clear below. Theorem \ref{sloppythm} follows directly from the results presented in this section. 

To prove the theorem, recall that (because $\mathbb{T}_0$ is reducible) we have at our disposal a fast fibre map ${\bf N}e_0$ for $\mathbb{T}_0$, defined by a family of injective matrices $N=N(\phi)$ that satisfies $\mathbb{R}^M = {\rm im}\, e_0'(\phi) \oplus {\rm im}\, N(\phi)$ for every $\phi \in (\mathbb{R}/2\pi\mathbb{Z})^m$. This enables us to make the ansatz
\begin{align}\label{Ansatz}
\underbrace{e_j(\phi)}_{\footnotesize \begin{array}{c} \in \\  \mathbb{R}^M \end{array}} \ \  = \!\! \underbrace{e_0'(\phi)}_{\footnotesize \begin{array}{c} \in \\  \mathcal{L}(\mathbb{R}^m, \mathbb{R}^M) \end{array}}
 \!\!\!\!\! \cdot  \  
 \underbrace{{\bf g}_j(\phi)}_{\footnotesize \begin{array}{c} \in \\  \mathbb{R}^m \end{array} }\ \  + \!\!\!\!\!
 \underbrace{N(\phi)}_{\footnotesize \begin{array}{c} \in \\   \mathcal{L}(\mathbb{R}^{M-m}, \mathbb{R}^M) \end{array}  }
 \!\!\!\!\!\!\! \cdot \ 
 \underbrace{{\bf h}_j(\phi)}_{\footnotesize \begin{array}{c} \in \\  \mathbb{R}^{M-m} \end{array}} \, ,
\end{align}
for (unknown) smooth functions ${\bf g}_j: (\mathbb{R}/2\pi\mathbb{Z})^m \to \mathbb{R}^m$ and ${\bf h}_j: (\mathbb{R}/2\pi\mathbb{Z})^m \to \mathbb{R}^{M-m}$. This ansatz decomposes  $e_j$ into  components in the direction of the tangent bundle ${\bf T}e_0$ and the fast fibre bundle ${\bf N}e_0$.
\begin{lem} \label{ansatzlemma}
%Assume that the families of matrices $L=L(\phi)$ and $N=N(\phi)$ satisfy \eqref{N0M0}.  
The ansatz \eqref{Ansatz} transforms equation \eqref{formulalinearisationC} into 
\begin{align}\label{newequation}
\mathfrak{c}(e_j, {\bf f}_j) = e_0' \cdot \left( \partial_{\omega} {\bf g}_j + {\bf f}_j \right) + N \cdot(\partial_{\omega} - L)({\bf h}_j) = {\bf G}_j\, .
\end{align}
\end{lem}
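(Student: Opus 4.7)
The strategy is a direct computation: substitute the ansatz \eqref{Ansatz} into the definition \eqref{formulalinearisationCexplicit} of $\mathfrak{c}$, split the result into its tangential piece (coming from $e_0'\cdot {\bf g}_j$) and its normal piece (coming from $N\cdot {\bf h}_j$), and simplify each of the two pieces by invoking one of the identities already established earlier in the excerpt.

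For the tangential piece $(\partial_{\omega} - {\bf F}_0'\circ e_0)\cdot(e_0'\cdot {\bf g}_j)$, I would first apply the Leibniz rule to $\partial_{\omega}(e_0'\cdot {\bf g}_j)$, obtaining $e_0''(\cdot)(\omega, {\bf g}_j) + e_0'\cdot \partial_{\omega} {\bf g}_j$. The key step is then to invoke the identity \eqref{Eremk1} from Remark \ref{rem:nonunique2}, namely $e_0''(\phi)(\omega,u) = ({\bf F}_0'\circ e_0)(\phi)\cdot e_0'(\phi)\cdot u$ (which is itself just the directional derivative of the semi-conjugacy $\partial_\omega e_0 = {\bf F}_0\circ e_0$). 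Applied pointwise with $u={\bf g}_j(\phi)$, this shows that the second-derivative term exactly cancels $({\bf F}_0'\circ e_0)\cdot e_0' \cdot {\bf g}_j$, so that the tangential piece collapses to $e_0'\cdot \partial_{\omega} {\bf g}_j$.

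For the normal piece $(\partial_{\omega} - {\bf F}_0'\circ e_0)\cdot(N\cdot {\bf h}_j)$, I would again apply Leibniz, giving $(\partial_{\omega} N)\cdot {\bf h}_j + N\cdot \partial_{\omega} {\bf h}_j$, and substitute the reducibility identity \eqref{N0M0} from Lemma \ref{variationalparametrisation} rewritten as $\partial_{\omega} N = ({\bf F}_0'\circ e_0)\cdot N - N\cdot L$. The $({\bf F}_0'\circ e_0)\cdot N\cdot {\bf h}_j$ contributions cancel, leaving $N\cdot(\partial_{\omega} - L)({\bf h}_j)$. Adding back the ${\bf f}_j$ contribution $e_0'\cdot {\bf f}_j$ and combining the two simplified pieces yields exactly \eqref{newequation}.

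I do not anticipate any genuine obstacle: the lemma really amounts to the observation that the two basic identities at our disposal, the semi-conjugacy for $e_0$ and the reducibility equation \eqref{N0M0} for $N$, are designed precisely so as to block-diagonalise the linearised conjugacy operator $\mathfrak{c}$ into an $e_0'$-block (whose dynamics is $\partial_\omega$) and an $N$-block (whose dynamics is $\partial_\omega - L$). The only bookkeeping to be careful with is the symmetry of $e_0''$ and making sure that $\partial_\omega$ is consistently interpreted as directional differentiation on $(\mathbb{R}/2\pi\mathbb{Z})^m$ when it is applied to the $\phi$-dependent linear-map valued functions $e_0'$ and $N$.
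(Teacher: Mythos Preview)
Your proposal is correct and follows essentially the same approach as the paper: substitute the ansatz, apply the Leibniz rule to each of the two summands, and invoke the differentiated semi-conjugacy identity \eqref{Eremk1} for the tangential piece and the reducibility identity \eqref{N0M0} for the normal piece to obtain the stated decomposition. The paper's proof is organised as a single displayed chain of equalities rather than treating the two pieces separately, but the logical content is identical.
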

\begin{proof}
We use our definitions, and  results  derived above, to compute:
\begin{align}\nonumber
{\bf G}_j = \mathfrak{c}(e_j, {\bf f}_j) = &  \ ( \partial_{\omega}  -  {\bf F}_0'\circ e_0 )\cdot e_j+ e_0' \cdot {\bf f}_j 
\\ \nonumber
 =  &\  ( \partial_{\omega}  -  {\bf F}_0' \circ e_0 )\cdot \left( e_0' \cdot {\bf g}_j + N\cdot {\bf h}_j \right) + e_0' \cdot {\bf f}_j
 \\ \nonumber
 = & \  e_0''({\bf g}_j, \omega)    + e_0' \cdot \partial_{\omega}{\bf g}_j    +  \partial_{\omega}N \cdot {\bf h}_j  + N \cdot \partial_{\omega}{\bf h}_j 
\\ \nonumber 
& \   - ({\bf F}_0'. \circ e_0) \cdot  e_0' \cdot {\bf g}_j  -   ({\bf F_0}' \circ e_0)\cdot N \cdot {\bf h}_j  + e_0' \cdot {\bf f}_j
\\ \nonumber 
 = & \   \underbrace{e_0''({\bf g}_j, \omega)  -   ({\bf F}_0'. \circ e_0) \cdot  e_0' \cdot {\bf g}_j }_{=0} + e_0' \cdot \partial_{\omega}{\bf g}_j + e_0' \cdot {\bf f}_j
\\ \nonumber 
& \ + N \cdot \partial_{\omega}{\bf h}_j + \underbrace{\partial_{\omega}N \cdot {\bf h}_j  -   ({\bf F_0}' \circ e_0)\cdot N \cdot {\bf h}_j}_{=-N \cdot L \cdot {\bf h}_j}
\\ \nonumber 
 = & \ e_0' \cdot \left( \partial_{\omega}{\bf g}_j + {\bf f}_j \right) + N \cdot \left(\partial_{\omega} - L \right) \cdot {\bf h}_j\, . 
\end{align}
We clarify these equalities below:
\begin{itemize}
\item[1.] The first equality is \eqref{formulalinearisationC}; 
\item[2.] In the second equality, we used \eqref{formulalinearisationCexplicit};
\item[3.] The third equality is our ansatz \eqref{Ansatz};
\item[4.] The fourth equality follows from the product rule (applied twice); 
\item[5.] In the fifth equality, the terms in the sum were re-ordered;
\item[6.] The final equality follows from \eqref{Eremk1} and 
\eqref{N0M0}.
\end{itemize}
This proves the lemma.
\end{proof}
\noindent Lemma \ref{ansatzlemma} allows us to solve equation \eqref{newequation} by splitting it into a component along the tangent bundle ${\bf T}\mathbb{T}_0$ and a component along the fast fibre bundle ${\bf N}\mathbb{T}_0$ of $\mathbb{T}_0$. In what follows we  denote by 
$$\pi: (\mathbb{R}/2\pi\mathbb{Z})^m \to \mathcal{L}(\mathbb{R}^M, \mathbb{R}^M)$$
the family of projections onto the tangent bundle  along the fast fibre bundle. That is, each $\pi(\phi): \mathbb{R}^M\to\mathbb{R}^M$ is the unique projection that satisfies
$$\pi(\phi)\cdot e_0'(\phi) = e_0'(\phi)\ \mbox{and}\ \pi(\phi)\cdot N(\phi) = 0\, .$$
Proposition \ref{projectionprop} below provides an explicit formula for $\pi(\phi)$. It is clear from this formula that $\pi$ depends smoothly on the base point $\phi \in (\mathbb{R}/2\pi\mathbb{Z})^m$.

Applying $\pi$ and $1-\pi$ to \eqref{newequation} produces, respectively, 
 \begin{align}\nonumber 
& e_0' \cdot ( \partial_{\omega} {\bf g}_j + {\bf f}_j ) =  \pi \cdot {\bf G}_j \, , \\  \nonumber 
& N \cdot (\partial_{\omega} - L)({\bf h}_j) = (1-\pi)\cdot {\bf G}_j\, .
\end{align}
Because $e_0'(\phi)$ and $N(\phi)$ are injective, these equations are  equivalent to 
\begin{align}\label{tworeducedequations} 
\begin{array}{rll}
 \partial_{\omega} {\bf g}_j + {\bf f}_j   = &  (e_0')^{+} \cdot \pi \cdot {\bf G}_j & =: U_j  \, , \\  
 (\partial_{\omega} - L)({\bf h}_j)  = & N^{+}\cdot(1-\pi)\cdot {\bf G}_j & =: V_j \, .
\end{array}
\end{align}
Here, $A^+ := (A^TA)^{-1}A^T$ denotes the Moore-Penrose pseudo-inverse, which is well-defined for an injective linear map $A$. Clearly, $(e_0')^{+}$ and $N^+$ depend smoothly on $\phi\in(\mathbb{R}/2\pi\mathbb{Z})^m$. We give these equations a special name.
\begin{defi}
We call the first equation in \eqref{tworeducedequations}, 
\begin{align} \label{tworeducedequations1}
 \partial_{\omega} {\bf g}_j + {\bf f}_j  = U_j\, ,
 \end{align}
 the $j$-th {\it tangential homological equation}. The second equation in \eqref{tworeducedequations},  
\begin{align} \label{tworeducedequations2}
 (\partial_{\omega} - L)({\bf h}_j)  = V_j\, , 
 \end{align}
is called the $j$-th {\it normal homological equation}. 
\end{defi}
%\noindent In the remainder of this section, we show how the homological equations may be solved.
\begin{remk}
To recap, we note that \eqref{tworeducedequations1} and \eqref{tworeducedequations2} are inhomogeneous linear equations for the three unknown smooth functions ${\bf f}_j, {\bf g}_j, {\bf h}_j$ and with the  inhomogeneous right hand sides $U_j, V_j$. 
The domains and co-domains of these functions are given by
 \begin{align} \nonumber
 {\bf f}_j, {\bf g}_j, U_j : (\mathbb{R}/2\pi\mathbb{Z})^m \to \mathbb{R}^m \ \mbox{and} \ {\bf h}_j, V_j: (\mathbb{R}/2\pi\mathbb{Z})^m \to \mathbb{R}^{M-m} \, .
 \end{align}
  \end{remk}
\noindent The following theorem shows how the homological equations can be solved. Explicit expressions for the Fourier series of the solutions are given in formulas \eqref{fintermsofuandx} and \eqref{YintermsofV}, that appear in the proof of the theorem.
\begin{thr}\label{solutionlemma}
%Assume that $\mathbb{T}_0 \subset \mathbb{R}^M$ is an embedded normally hyperbolic reducible (quasi-)periodic torus. %This means in particular that the matrix $L$ in  \eqref{tworeducedequations} is constant and hyperbolic. 

 For any smooth functions ${\bf g}_j, U_j: (\mathbb{R}/2\pi\mathbb{Z})^m \to \mathbb{R}^m$ and $V_j: (\mathbb{R}/2\pi\mathbb{Z})^m \to \mathbb{R}^{M-m}$, there are unique smooth functions  ${\bf f}_j: (\mathbb{R}/2\pi\mathbb{Z})^m \to \mathbb{R}^m$ and ${\bf h}_j:(\mathbb{R}/2\pi\mathbb{Z})^m \to \mathbb{R}^{M-m}$ that solve \eqref{tworeducedequations1} and \eqref{tworeducedequations2}. 
\end{thr}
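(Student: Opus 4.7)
Observing that the tangential and normal homological equations \eqref{tworeducedequations1} and \eqref{tworeducedequations2} are decoupled, I would solve each independently: the tangential one is an explicit algebraic rearrangement, while the normal one requires Fourier analysis on the torus.

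For the tangential equation, once smooth ${\bf g}_j$ and $U_j$ are given, one simply sets
$${\bf f}_j := U_j - \partial_\omega {\bf g}_j.$$
This is manifestly smooth and is the unique choice satisfying \eqref{tworeducedequations1}; its Fourier expansion provides the formula that will appear as \eqref{fintermsofuandx}.

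For the normal equation $(\partial_\omega - L){\bf h}_j = V_j$, I would pass to Fourier series. Writing
$${\bf h}_j(\phi) = \sum_{k\in\mathbb{Z}^m} \hat{\bf h}_{j,k}\, e^{i\langle k,\phi\rangle}, \qquad V_j(\phi) = \sum_{k\in\mathbb{Z}^m} \hat V_{j,k}\, e^{i\langle k,\phi\rangle},$$
and using $\partial_\omega e^{i\langle k,\phi\rangle} = i\langle k,\omega\rangle\, e^{i\langle k,\phi\rangle}$, equation \eqref{tworeducedequations2} separates into the uncoupled linear system
$$(i\langle k,\omega\rangle I - L)\, \hat{\bf h}_{j,k} = \hat V_{j,k} \quad \text{for every } k \in \mathbb{Z}^m.$$
Since $\mathbb{T}_0$ is normally hyperbolic, the spectrum $\sigma(L)$ avoids the imaginary axis, so each $i\langle k,\omega\rangle$ lies in the resolvent set of $L$; hence $\hat{\bf h}_{j,k}$ is uniquely forced to equal $(i\langle k,\omega\rangle I - L)^{-1} \hat V_{j,k}$. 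This is the formula underlying \eqref{YintermsofV} and already establishes uniqueness.

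The main obstacle is showing that the formal series $\sum_k \hat{\bf h}_{j,k}\, e^{i\langle k,\phi\rangle}$ genuinely represents a smooth function. The key ingredient is a uniform resolvent bound: hyperbolicity of $L$ yields a positive distance $\delta>0$ between $\sigma(L)$ and the imaginary axis, and combined with the asymptotic $\|(zI - L)^{-1}\|\to 0$ as $|z|\to\infty$, this gives $\sup_{k\in\mathbb{Z}^m} \|(i\langle k,\omega\rangle I - L)^{-1}\| \leq C < \infty$. Consequently $\|\hat{\bf h}_{j,k}\| \leq C\, \|\hat V_{j,k}\|$, and since $V_j$ is smooth its Fourier coefficients decay faster than any polynomial in $|k|$; the same decay is then inherited by the $\hat{\bf h}_{j,k}$, so the series converges in every $C^r$ norm and defines a smooth ${\bf h}_j$. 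It is worth emphasising that no Diophantine or small-divisor condition on $\omega$ is needed here---a simplification unavailable in the elliptic setting---precisely because hyperbolicity keeps the spectrum of $L$ off the entire imaginary axis rather than just away from the origin.
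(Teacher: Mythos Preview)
Your proposal is correct and follows essentially the same route as the paper: the tangential equation is solved by the explicit rearrangement ${\bf f}_j = U_j - \partial_\omega {\bf g}_j$, and the normal equation is solved Fourier-mode by Fourier-mode using the invertibility of $i\langle k,\omega\rangle - L$ guaranteed by hyperbolicity of $L$. The paper defers the smoothness argument for ${\bf h}_j$ (via the uniform resolvent bound) to a remark after the proof, whereas you include it in the body; on the other hand, the paper notes explicitly that $L$ real forces $h_{j,-k}=\overline{h_{j,k}}$, so that ${\bf h}_j$ is real-valued, a point you omit.
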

\begin{proof}
The tangential homological equation \eqref{tworeducedequations1} can be rewritten as
$${\bf f}_j = U_j - \partial_{\omega} {\bf g}_j\, .$$
This shows that for any smooth ${\bf g}_j$ and $U_j$ there exists a unique solution ${\bf f}_j$. However, in view of Corollary \ref{normalformcorollary} below, we would also like a formula for the solution of the tangential homological equation in the form of a Fourier series. To this end, we expand $U_j$ and ${\bf g}_j$ in Fourier series as
$$U_j(\phi) = \sum_{k\in \mathbb{Z}^m} U_{j,k} e^{i\langle k, \phi\rangle} \ \mbox{and}\ {\bf g}_j(\phi) = \sum_{k\in \mathbb{Z}^m} g_{j,k} e^{i\langle k, \phi\rangle}\, .$$
We use the notation 
$$\langle k, \phi\rangle := k_1 \phi_1 + \ldots + k_m \phi_m\, $$
for what is often called the $k$-th {\it combination angle}. 
Note that the Fourier coefficients
$ U_{j,k}, g_{j,k} \in \mathbb{C}^{m}$ are complex vectors satisfying $U_{j,-k} = \overline{U}_{j,k}$ and $g_{j,-k} = \overline{g}_{j,k}$, because $U_j$ and ${\bf g}_j$ are real-valued. 
We similarly expand ${\bf f}_j$ in a Fourier series by making the solution ansatz 
$${\bf f}_j(\phi) = \sum_{k\in \mathbb{Z}^m} f_{j,k} e^{i\langle k, \phi\rangle}, $$
with $f_{j,k}\in \mathbb{C}^m$. 
In terms of these Fourier series, equation \eqref{tworeducedequations1} becomes
$$\sum_{k\in \mathbb{Z}^m} (i\langle \omega,k\rangle g_ {j,k} + f_{j,k})  e^{i\langle k, \phi\rangle} = \sum_{k\in \mathbb{Z}^m}  U_{j,k}  e^{i\langle k, \phi\rangle}\, ,$$
or, equivalently,
$$  i\langle \omega,k\rangle g_ {j,k} + f_{j,k} = U_{j,k}\ \mbox{for all}\ k\in \mathbb{Z}^m\, . $$
%where we similarly Fourier expanded ${\bf f}_j(\phi) = \sum_{k} f_{j,k}\, e^{i\langle k, \phi\rangle }$. For every choice of Fourier coefficients $X_{j,k}$, this clearly has a solution for $f_{j, k}$, given by
This shows that for any choice of Fourier coefficients $U_{j,k}$ for $U_j$ and $g_{j,k}$ for ${\bf g}_j$ there are unique Fourier coefficients $f_{j,k}$ for the solution ${\bf f}_j$ to the tangential homological equation. These coefficients  are given by 
\begin{align}\label{fintermsofuandx}
f_{j,k} = U_{j,k} -  i\langle \omega,k\rangle g_ {j,k} \ \mbox{for all}\ k\in \mathbb{Z}^m\, .
\end{align}
It is clear from this equation that  $f_{j,-k} = \overline{f}_{j,k}$ so that ${\bf f}_j$ is real-valued.

%, which is at least as smooth as $U_j$ and $\partial_{\omega} X_j$.
We proceed to solve the normal homological equation   \eqref{tworeducedequations2}.  We again use Fourier series, and thus we expand ${\bf h}_j$  and $V_j$  as 
\begin{align}\label{VYexpansion}
{\bf h}_{j}(\phi)  = \sum_{k\in \mathbb{Z}^m} h_{j,k} e^{i\langle k, \phi\rangle}  \ \mbox{and}\  
V_{j}(\phi)    = \sum_{k\in \mathbb{Z}^m} V_{j,k} e^{i\langle k, \phi\rangle } \, ,
\end{align}
for $h_{j,k}, V_{j,k}\in \mathbb{C}^{M-m}$ satisfying $V_{j,-k} = \overline{V}_{j,k}$. 
 %, and the vector field $X_j$ as
%$$X_j(\phi) = \sum_{k\in \mathbb{Z}^M} X_{j,k} e^{i\langle k, \phi\rangle } \ \mbox{with}\ X_{j,k}\in \mathbb{C}^M\, .$$
%We will have that $V_{j,k}=\overline{V}_{j,-k}$ and $Y_{j,k}=\overline{Y}_{j,-k}$ because $V_j$ and $Y_j$ are real-valued. 
Substitution of \eqref{VYexpansion}  into \eqref{tworeducedequations2} produces
$$\sum_{k\in \mathbb{Z}^m} (i\langle \omega, k \rangle - L ) { h}_{j,k} e^{i\langle k, \phi\rangle} = \sum_{k\in \mathbb{Z}^m} V_{j,k} e^{i\langle k, \phi\rangle}\, , $$
so that we obtain the  equations
\begin{align}
\label{iomegaLequation}
( i\langle \omega,k\rangle - L ) \, { h}_{j,k} = V_{j,k} \ \mbox{for all}\ k\in \mathbb{Z}^m\, .
\end{align}
Because  $L$ has no eigenvalues on the imaginary axis, the  matrix $i\langle \omega,k\rangle - L$ is invertible. Each of the equations in  \eqref{iomegaLequation}  therefore possesses a unique solution, which is given by 
\begin{equation}\label{YintermsofV}
{ h}_{j,k} = ( i\langle \omega,k\rangle - L )^{-1}  V_{j,k} \, . 
\end{equation}
Because the matrix $L$ is real, it follows that $h_{j,-k} = \overline{h}_{j,k}$. This proves the theorem.
\end{proof}

\begin{remk} Formulas \eqref{fintermsofuandx} and \eqref{YintermsofV} allow us to estimate the smoothness of the solutions ${\bf f}_j$ and ${\bf h}_j$ to equations \eqref{tworeducedequations1}, \eqref{tworeducedequations2} in terms of the smoothness of ${\bf g}_j, U_j$ and $V_j$. 
To see this, let ${\bf A}: (\mathbb{R}/2\pi\mathbb{Z})^m \to \mathbb{C}^p$ be a function with Fourier series $${\bf A}(\phi) = \sum_{k\in \mathbb{Z}^m} A_ke^{i\langle k, \phi\rangle}\, .$$

For  $k\in \mathbb{Z}^m$, define $|k|:= \left( |k_1|^2+\ldots+|k_m|^2 \right)^{\frac{1}{2}}$, and let $W_{|k|} \in \mathbb{R}_{>0}$ be weights satisfying $W_{|k|}\to\infty$ as $|k|\to\infty$.
When $||\cdot||$ is any norm on $\mathbb{C}^p$, 
then 
$$|| {\bf A}||_W := \left( \sum_{k\in \mathbb{Z}^m} ||A_k||^2 W_{|k|}^2 \right)^{\frac{1}{2}}$$
defines a norm of ${\bf A}$ that measures the growth of its Fourier coefficients. For example, when $W_{|k|}=(1+|k|^2)^{s/2}$ for some $s >  0$, then it is a Sobolev norm. %, whereas if $W_{|k|} = e^{r|k|}$ (for some $r>0$), then it is an analytic norm.  
It follows directly from \eqref{fintermsofuandx} that $|| {\bf f}_j ||_W \leq ||U_j||_W +  || \partial_{\omega} {\bf g}_j||_W$, which shows that ${\bf f}_j$ is at least as smooth as $U_j$ and $\partial_{\omega} {\bf g}_j$.

To find a similar bound for $||{\bf h}_j||_W$, note that the hyperbolicity of $L$ implies that the function $\lambda \mapsto ||(i\lambda - L)^{-1}||_{\rm op}$  on $\mathbb{R}$,
that assigns to $\lambda$ the operator norm of $(i\lambda - L)^{-1}$, is  well-defined, and therefore also continuous. It converges to $0$ as $\lambda \to \pm \infty$. Hence it is uniformly bounded in $\lambda$. In particular, 
$$||(i\langle k, \omega \rangle - L)^{-1}||_{\rm op} \leq C_L := \max_{\lambda\in\mathbb{R}} ||(i\lambda - L)^{-1}||_{\rm op}\, . $$  
It thus follows from \eqref{YintermsofV} that 
$$||{\bf h}_j||_W \leq C_L ||V_j||_W\, .$$
This means that ${\bf h}_j$ is at least as smooth as $V_j$.
\end{remk}

\noindent Theorem \ref{solutionlemma} shows that one can choose ${\bf g}_j$ (and thus the component of $e_j$ tangent to $\mathbb{T}_0$) freely when  solving the homological equations \eqref{tworeducedequations1} and \eqref{tworeducedequations2}. This reflects the fact that the embedding of $\mathbb{T}_{\varepsilon}$ is not unique. Corollary \ref{normalformcorollary} below states that it is possible to choose ${\bf g}_j$ in such a way that ${\bf f}_j$ is in ``normal form''. We first define this concept.
\begin{defi} 
Let $${\bf f} = \omega + \varepsilon {\bf f}_1 + \varepsilon^2 {\bf f}_2 + \ldots : (\mathbb{R}/2\pi\mathbb{Z})^m \to \mathbb{R}^m$$ be an asymptotic expansion of a vector field on $(\mathbb{R}/2\pi\mathbb{Z})^m$. Assume that  the Fourier series of ${\bf f}_j$ is given by 
$${\bf f}_j(\phi) = \sum_{k\in \mathbb{Z}^m} f_{j, k}e^{i\langle k, \phi\rangle} \ \mbox{for certain}\ f_{j,k}\in \mathbb{C}^m\, .$$
For  $k\in \mathbb{Z}^m$, denote $|k|= \left( |k_1|^2+\ldots+|k_m|^2 \right)^{\frac{1}{2}}$ as before.
We say that ${\bf f}_j$ 
is {\it in normal form to order $K\in \mathbb{N}\cup\{\infty\}$ in its Fourier expansion} if $$f_{j,k} = 0 \ \mbox{for all}\ k\in\mathbb{Z}^m \ \mbox{with} \ \langle \omega, k \rangle \neq 0\ \mbox{and} \ |k|\leq K\, .$$ 
\end{defi}
\begin{remk} We remark that ${\bf f}_j$ is in normal form to order $K$ in its Fourier expansion, if and only if its truncated Fourier series $${\bf f}_j^K(\phi) := \sum_{|k|\leq K} f_{j,k} e^{i\langle k, \phi\rangle}$$ depends only on so-called {\it resonant combination angles}. A combination angle $\langle k, \phi\rangle$ is called resonant when $\langle k, \omega\rangle =0$. %Note that this is the case precisely when $\partial_{\omega} {\bf f}_j^K = 0$. 
\end{remk}
\noindent The following result shows that we can arrange for the reduced phase vector field to be in normal form to arbitrarily high-order in its Fourier expansion.
\begin{cor}\label{normalformcorollary}
For any (finite) $K\in \mathbb{N}$ the function ${\bf g}_j$ can be chosen in such a way that the solution ${\bf f}_j$ to the tangential homological equation 
$$\partial_{\omega}{\bf g}_j + {\bf f}_j  = U_j$$
is in normal form 
to order $K$ in its Fourier expansion.
\end{cor}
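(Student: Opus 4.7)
The plan is to read the result off the Fourier-side relation \eqref{fintermsofuandx} derived in the proof of Theorem \ref{solutionlemma}, namely
\begin{align*}
f_{j,k} = U_{j,k} - i\langle \omega, k\rangle\, g_{j,k} \quad \text{for all } k \in \mathbb{Z}^m.
\end{align*}
This identity already exposes the full mechanism: whenever $\langle \omega, k\rangle \neq 0$ the coefficient $f_{j,k}$ can be annihilated by choosing $g_{j,k} = U_{j,k}/(i\langle\omega,k\rangle)$, whereas when $\langle \omega, k\rangle = 0$ the value of $g_{j,k}$ plays no role in $f_{j,k}$. So resonant Fourier modes of ${\bf f}_j$ are rigidly prescribed by $U_j$, while nonresonant modes can be eliminated one at a time.

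Concretely, I would define ${\bf g}_j$ by prescribing its Fourier coefficients as follows: set $g_{j,k} := U_{j,k}/(i\langle\omega,k\rangle)$ for every $k\in\mathbb{Z}^m$ with $|k|\leq K$ and $\langle\omega,k\rangle\neq 0$, and set $g_{j,k} := 0$ for all other $k$. Since $K$ is finite, only finitely many Fourier coefficients of ${\bf g}_j$ are nonzero, so ${\bf g}_j$ is a trigonometric polynomial, hence smooth. Reality is automatic: for any nonresonant $k$ with $|k|\leq K$ the relations $U_{j,-k}=\overline{U_{j,k}}$ and $\langle\omega,-k\rangle=-\langle\omega,k\rangle$ give $g_{j,-k}=\overline{g_{j,k}}$, so ${\bf g}_j$ is real-valued.

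With this ${\bf g}_j$, formula \eqref{fintermsofuandx} returns $f_{j,k}=0$ for every $k$ with $|k|\leq K$ and $\langle\omega,k\rangle\neq 0$, which is exactly the defining property of ${\bf f}_j$ being in normal form to order $K$ in its Fourier expansion. The construction is essentially obstruction-free: because $K$ is finite, only finitely many divisors $i\langle\omega,k\rangle$ are inverted, and each is nonzero by construction, so no summability or smoothness issue arises. The genuine difficulty—and the reason the corollary is stated with finite $K$—would only appear in the limit $K=\infty$, where one would need Diophantine-type lower bounds on $|\langle\omega,k\rangle|$ to control the growth of the $g_{j,k}$ and thereby preserve smoothness of ${\bf g}_j$; no such hypothesis is needed here.
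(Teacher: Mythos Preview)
Your proof is correct and matches the paper's own argument essentially line for line: both choose $g_{j,k}=U_{j,k}/(i\langle\omega,k\rangle)$ for nonresonant $k$ with $|k|\le K$ and $g_{j,k}=0$ otherwise, observe that ${\bf g}_j$ is then a trigonometric polynomial (hence smooth), and read off $f_{j,k}=0$ for those $k$. Your additional remarks on reality and on the small-divisor obstruction at $K=\infty$ are welcome elaborations but do not change the method.
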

\begin{proof}
Recall that the tangential homological equation reduces to the equations
\begin{align}\label{tangentialfourier}
i\langle \omega,k\rangle g_ {j,k} + f_{j,k} = U_{j,k} 
\end{align}
for the Fourier coefficients of ${\bf f}_j$, ${\bf g}_j$ and $U_j$---see \eqref{fintermsofuandx}.  Given $K\in \mathbb{N}$,  choose 
\begin{align}\label{XintermsofU}
\begin{array}{ll} 
g_{j,k}  = \frac{U_{j,k} }{i \langle k, \omega \rangle} & \mbox{when}\ \langle k, \omega \rangle \neq 0 \ \mbox{and}\ |k|\leq K, \\
g_{j,k}  = 0  &   \mbox{when} \ \langle k, \omega \rangle = 0 \ \mbox{or} \ |k|> K.
\end{array}
\end{align} 
%whenever  $\langle k, \omega \rangle \neq 0$, and $X_{j,k}=0$ otherwise. Indeed, we then have $f_{j,k} = 0$ when $\langle k, \omega \rangle \neq 0$ and $f_{j,k} = U_{j,k}$ when $\langle k, \omega \rangle = 0$. 
The (unique) solutions to \eqref{tangentialfourier} are then given by
\begin{align}
\begin{array}{ll} 
f_{j,k}=0 & \mbox{when}\ \langle k, \omega \rangle \neq 0 \ \mbox{and}\ |k|\leq K, \\
f_{j,k}=U_{j,k} &  \mbox{when} \ \langle k, \omega \rangle = 0 \ \mbox{or} \ |k|> K.
\end{array}
\end{align}
With these choices, ${\bf g}_j$ is a smooth function, as its Fourier expansion is finite. It is also clear that ${\bf f}_j$ is in normal form to order $K$ in its Fourier expansion.
% we have 
%$${\bf f}_j(\phi) = \sum_{\langle k, \omega \rangle = 0} U_{j,k} e^{i\langle k, \phi\rangle}\, \  \mbox{and therefore}\ \partial_{\omega}{\bf f}_j = 0\, .$$ 
 %However, the Fourier expansion ${\bf g}_j(\phi)  = \sum_k g_{j,k}e^{i\langle k, \phi\rangle} $---with $g_{j,k}$ as defined in \eqref{XintermsofU}---is only formal and may actually diverge. To avoid this problem, we truncate the Fourier expansion of ${\bf g}_j$.  More precisely, for any arbitrarily large $K\in \mathbb{N}$ we may define $g_{j,k}$ by formula \eqref{XintermsofU} when 
 %$$|k|:=|k_1|+\ldots+|k_m| \leq K\, , $$ 
  %while setting $g_{j,k}:=0$ when $|k| > K$. Then ${\bf g}_j$ possesses a finite Fourier expansion and thus converges. Moreover, 
%with this choice, we have that $f_{j,k} = 0$ when $\langle k, \omega \rangle \neq 0$ and $|k|\leq K$, while $f_{j,k}=U_{j,k}$ otherwise. That is, 
%\begin{equation}\label{fjformula}
%{\bf f}_j(\phi)  =  \!\!\!\! \sum_{\scriptsize{\begin{array}{c}\langle k, \omega \rangle = 0 \\ \mbox{when}\  |k|\leq K  \end{array}}} \!\!\! U_{j,k} \, e^{i\langle k, \phi\rangle} \, .
%\end{equation}
%It follows that $(\partial_{\omega} {\bf f}_j)(\phi) = \sum_{|k|>K} i\langle k, \omega \rangle U_{j, k} e^{i\langle k, \phi\rangle} $, i.e. , the Fourier expansion of $\partial_{\omega} {\bf f}_j$ vanishes to order $K$.
\end{proof}

%\begin{remk}
%The choice of $X_j$ given in \eqref{XintermsofU} may be quite singular if there are $k\in\mathbb{Z}^N$ for which $\langle k, \omega \rangle \approx 0$. This leads to $X_j$ not being as smooth as $f_j$ in general. We will ignore this issue here,  truncating the Fourier series of $U_j$ and $X_j$ at an appropriate order $K\gg 1$ if necessary.
%\end{remk}
%To solve this equation, we must simply project $G_i=G_i(\phi)$ onto ${\rm im}\, D\Gamma = T\Gamma(\mathbb{T}^N)$ and the normal bundle ${\rm}\, N_0$. 
%We shall denote by $P(\phi):  \mathbb{R}^n\to \mathbb{R}^n$
%the projection onto ${\rm im}\, D\Gamma(\phi)$ along ${\rm im}\, N_0(\phi)$. Then $P$ defines a smooth map from $\mathbb{T}^N$ to $L(\mathbb{R}^n, \mathbb{R}^n)$. 

%We obtain equations

%The second of these equation should have a unique solution for $Y_i$. This is due to the normal hyperbolicity of the invariant torus. However, it will generally not be easy to compute $Y_i$ explicitly by inverting the operator $\partial_{\omega} - M_0$. 

\begin{remk}
Recall that the flow of the ODE $\dot \phi = \omega$  on $(\mathbb{R}/2\pi\mathbb{Z})^m$ is periodic or quasi-periodic and given by the formula $\phi \mapsto \phi + \omega t \ \mbox{mod}\, (2\pi \mathbb{Z})^m$.  
%For $k\in \mathbb{Z}^m$ and nonzero $a_k\in \mathbb{C}^m$, let us define the (complex-valued) ``trigonometric monomial'' vector field ${\bf a}_k: (\mathbb{R}/2\pi\mathbb{Z})^m \to \mathbb{C}^m$ by
%$${\bf a}_k(\phi) := a_k e^{i\langle k, \phi\rangle}\, .$$  
 It follows that the time-average over this (quasi-)periodic flow, of a complex exponential vector field $f_k e^{i\langle k, \phi\rangle}$ (with $f_k\in \mathbb{C}^m$) is given by 
\begin{align}\nonumber 
 \lim_{T\to \infty} \frac{1}{T} \int_0^T & f_ke^{i\langle k, \phi + \omega t \rangle} \, dt    = \left\{ \begin{array}{ll} f_k e^{i\langle k, \phi\rangle} & \mbox{when}\  \langle k, \omega \rangle  = 0 \, , \\ 0 & \mbox{when} \ \langle k, \omega \rangle  \neq 0 \, . \end{array} \right. 
\end{align}
This shows that $f_k e^{i\langle k, \phi\rangle}$ is resonant (that is: it depends on a resonant combination angle) precisely when it is equal to its average over the (quasi-)periodic flow, whereas $f_ke^{i\langle k, \phi\rangle}$ is nonresonant precisely when this average is zero. 
For an arbitrary (and sufficiently regular) Fourier series
it follows that 
$$\lim_{T\to\infty} \frac{1}{T} \int_0^T \left( \sum_{k\in\mathbb{Z}^m} f_k\, e^{i\langle k, \phi+\omega t\rangle} \right) dt = \sum_{\footnotesize \begin{array}{c} k\in \mathbb{Z}^m \\ \langle \omega, k\rangle = 0 \end{array}} \!\!\!\! f_k \, e^{i\langle k, \phi\rangle}\, .$$
We conclude that averaging a Fourier series removes its nonresonant terms, while keeping its resonant terms untouched.
Corollary \ref{normalformcorollary} shows that it can be arranged that the term ${\bf f}_j$ in the reduced vector field ${\bf f}$ is a sum of resonant terms only (to arbitrarily high order). We may thus loosely interpret Corollary \ref{normalformcorollary}  as a high-order averaging theorem, see \cite{sanvermur}.
\end{remk}

\begin{remk}
We include the following result for completeness. Applied to $A=e_0'(\phi)$ and $B=N(\phi)$ it gives a formula for the projection $\pi(\phi)$ onto the tangent space to $\mathbb{T}_0$ at $e_0(\phi)$  along the fast fibre at that point. This formula is not only useful for practical computations, but also shows explicitly that $\pi(\phi)$  depends smoothly on $\phi$. A proof of Proposition \ref{projectionprop} is given in \cite{BobIanMartin}.  
\end{remk}
\begin{prop}\label{projectionprop}
Let $1\leq m \leq M$ and assume that $A \in \mathcal{L}(\mathbb{R}^m, \mathbb{R}^M)$ and $B \in \mathcal{L}(\mathbb{R}^{M-m}, \mathbb{R}^M)$ are linear maps satisfying 
$\mathbb{R}^M = {\rm im}\, A \oplus {\rm im}\, B$. We denote by $\pi\in \mathcal{L}(\mathbb{R}^M, \mathbb{R}^M)$ the ``oblique projection'' onto the image of $A$ along the image of $B$, i.e., $\pi$ is the unique linear map  satisfying $\pi A = A$ and $\pi B=0$. Then $\pi$ is given by the formula
$$\pi =  A (A^T\pi(B)^{\perp} A)^{-1}A^T\pi(B)^{\perp}  \ \mbox{in which}\ \pi(B)^{\perp} := (1 - B(B^TB)^{-1}B^T)\, .$$
The $T$ denotes matrix transpose. All the inverses in this formula exist. Note that $\pi(B)^{\perp}$ is the orthogonal projection onto $\ker B^T$ along ${\rm im}\, B$.
\end{prop}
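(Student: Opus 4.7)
The plan is to verify directly that the right-hand side of the displayed formula is a well-defined linear map satisfying the two characterizing properties $\pi A = A$ and $\pi B = 0$, and then to note that these two properties uniquely determine $\pi$ by the direct-sum hypothesis.

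First I would check that all inverses in the formula make sense. Since $\mathbb{R}^M = \operatorname{im} A \oplus \operatorname{im} B$ and the two images have complementary dimensions $m$ and $M-m$, both $A$ and $B$ must be injective. Hence $B^T B$ is invertible and $\pi(B)^{\perp} = 1 - B(B^T B)^{-1} B^T$ is well-defined; a standard calculation then shows that $\pi(B)^{\perp}$ is the orthogonal projection onto $(\operatorname{im} B)^{\perp} = \ker B^T$, so in particular it is symmetric, idempotent, and positive semidefinite.

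The key step is to establish invertibility of $A^T \pi(B)^{\perp} A \in \mathcal{L}(\mathbb{R}^m, \mathbb{R}^m)$. Suppose $A^T \pi(B)^{\perp} A v = 0$ for some $v \in \mathbb{R}^m$. Pairing with $v$ and using symmetry and idempotence of $\pi(B)^{\perp}$, one obtains
\begin{align*}
0 = \langle A^T \pi(B)^{\perp} A v,\, v\rangle = \langle \pi(B)^{\perp} A v,\, \pi(B)^{\perp} A v\rangle = \|\pi(B)^{\perp} A v\|^2,
\end{align*}
so $Av \in \ker \pi(B)^{\perp} = \operatorname{im} B$. Combined with $Av \in \operatorname{im} A$ and the transversality $\operatorname{im} A \cap \operatorname{im} B = \{0\}$, this forces $Av = 0$, and then injectivity of $A$ gives $v = 0$. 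This is the step I expect to be the main conceptual obstacle, as it is where the direct-sum hypothesis actually enters.

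Having this, the verification of the two defining properties is immediate. For $\pi A = A$, the inner factor telescopes: $(A^T \pi(B)^{\perp} A)^{-1} (A^T \pi(B)^{\perp} A) = I_m$, leaving $\pi A = A$. For $\pi B = 0$, I would use $\pi(B)^{\perp} B = B - B(B^T B)^{-1} B^T B = 0$, which kills the whole expression. Finally, I would observe that any $x \in \mathbb{R}^M$ admits a unique decomposition $x = A v + B w$ by the direct-sum hypothesis, so that the two conditions $\pi A = A$ and $\pi B = 0$ force $\pi x = A v$ and thereby determine $\pi$ uniquely; the formula thus coincides with the oblique projection. Smooth dependence of $\pi(\phi)$ on $\phi$ when applied with $A = e_0'(\phi)$ and $B = N(\phi)$ then follows from the formula, since matrix inversion and transposition are smooth on the open set where the relevant matrices are invertible.
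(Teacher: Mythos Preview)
Your proof is correct. The paper itself does not prove this proposition; it states the result and refers to \cite{BobIanMartin} for a proof, so there is no in-paper argument to compare against. Your direct verification---checking that $B^TB$ and $A^T\pi(B)^{\perp}A$ are invertible (the latter via the positive-semidefiniteness of $\pi(B)^{\perp}$ and the transversality hypothesis), then confirming $\pi A = A$ and $\pi B = 0$, and finally invoking uniqueness from the direct-sum decomposition---is the standard and natural route, and is presumably what the cited reference does as well.
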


\begin{comment}
\begin{lem}
 Maybe there is a nice way to estimate the norm of $(i\langle \omega,k\rangle - M_0 )^{-1}$ in terms of the norm of $M_0^{-1}$.
\end{lem}
\begin{proof}
For any real matrix $A$ and real number $\Omega$ we will have $$||A+i\Omega||^2_{\rm Frob} = {\rm tr} (A+i\Omega)(A^T-i\Omega) = {\rm tr}( AA^T + \Omega^2) \geq {\rm tr}( AA^T) = || A||^2_{\rm Frob}\, . $$
We also have 
$$||A||^2_{\rm Op} = \sup_{v^*v=1} v^*(A^T-i\Omega)(A+i\Omega) v = \sup_{v^*v=1} v^*(A^TA+\Omega^2)  v  =  \Omega^2 + \sup_{v^*v=1} v^*A^TA  v = \Omega^2 + ||A||_{\rm Op}^2$$
because $i\Omega v^*(A^T-A)v=0$ because $A^T-A$ is anti-symmetric. How do we use this for an estimate for the inverse?
\end{proof}
\end{comment}

\section{Reducibility for oscillator systems}\label{Floquetsection}
In this section we show that the invariant torus of a system of uncoupled oscillators (see the introduction) is reducible. We also give a formula for the fast fibre map for such a torus. The results in this section are a consequence of Floquet's theorem, which implies that the invariant circle defined by a single hyperbolic periodic solution of an ODE is reducible. The results in this section should thus be considered well-known, but for completeness we include them in detail. We start with the result for single hyperbolic periodic orbits.

%$$\dot x = \Omega(x) \ \mbox{for}\ x\in \mathbb{R}^{k}\, $$
%We shall later consider a number of such oscillators that are weakly coupled. 
%and assume that it possesses a {\it hyperbolic} periodic solution
%$$x(t) = \gamma(t)\ \mbox{with period}\ T\, .$$
%Recall that the map 
%$$\phi \mapsto \gamma(\omega^{-1}  \phi) \ \mbox{from} \ \mathbb{T}^1 \ \mbox{to}\ \mathbb{R}^k $$
%defines an embedding of an invariant circle for $\Omega$, sending solutions of the ODE
%$$\dot \phi = \omega := \frac{2\pi}{T}$$
%to integral curves of $\Omega$.  Reducibility follows from the following result.
\begin{thr}\label{floquetreducible}
Let $X: \mathbb{R} \to \mathbb{R}^M$ be a hyperbolic $T$-periodic orbit of a smooth vector field ${\bf  F}: \mathbb{R}^M\to \mathbb{R}^M$. Then the invariant circle $\mathbb{T}_0=X(\mathbb{R}) \subset \mathbb{R}^M$ is reducible and normally hyperbolic. Its fast fibre map is given by formula \eqref{fastmapperiodic}.
\end{thr}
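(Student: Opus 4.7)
The plan is to build the fast fibre map directly from the Floquet decomposition of the variational equation around $X(t)$. Write $\omega = 2\pi/T$ and let $\Phi(t)$ be the fundamental matrix solution of the variational equation $\dot Y = {\bf F}'(X(t)) Y$ with $\Phi(0) = I$. Floquet's theorem provides a decomposition $\Phi(t) = P(t)\, e^{tR}$, where $P: \mathbb{R} \to \mathcal{L}(\mathbb{R}^M, \mathbb{R}^M)$ is $T$-periodic with $P(0) = I$, and $R$ is a (possibly complex) matrix satisfying $e^{TR} = \Phi(T)$. Differentiating $X(t+s) = X(t)$ evaluated in $s$ at $s=0$ shows that $\dot X(0)$ is an eigenvector of the monodromy $\Phi(T)$ with eigenvalue $1$, so $0$ is a Floquet exponent and hyperbolicity forces all other Floquet exponents to have nonzero real part.

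Next I would fix the splitting. Choose an $(M{-}1)$-dimensional $\Phi(T)$-invariant subspace $V \subset \mathbb{R}^M$ complementary to $\mathbb{R}\cdot \dot X(0)$, corresponding to the Floquet multipliers different from $1$; such a $V$ exists by putting $\Phi(T)$ in Jordan form and collecting the generalized eigenspaces. Select the logarithm $R$ so that it respects this splitting, i.e.\ $R\dot X(0) = 0$ and $R(V)\subseteq V$, and pick any linear isomorphism $\iota: \mathbb{R}^{M-1} \to V$, viewed as an injection $\iota \in \mathcal{L}(\mathbb{R}^{M-1}, \mathbb{R}^M)$. Define
\begin{align*}
N(\phi) := P(\omega^{-1}\phi)\cdot \iota \qquad \text{and}\qquad L := \iota^{+}\, R\, \iota \in \mathcal{L}(\mathbb{R}^{M-1}, \mathbb{R}^{M-1}),
\end{align*}
which should agree with formula \eqref{fastmapperiodic}. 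By construction, the eigenvalues of $L$ are the nonzero Floquet exponents, and hyperbolicity of the periodic orbit makes $L$ hyperbolic. Periodicity of $P$ in $t$ with period $T$ makes $N$ well-defined on $\mathbb{R}/2\pi\mathbb{Z}$, so ${\bf N}e_0(\phi, u) := (e_0(\phi), N(\phi)\cdot u)$ is a well-defined smooth map.

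To check the two properties in Definition \ref{reducibledefi}, I would invoke Lemma \ref{variationalparametrisation}. Differentiating $\Phi = P e^{tR}$ yields $P'(t) = {\bf F}'(X(t))P(t) - P(t) R$. Multiplying on the right by $\iota$ and using $R\iota = \iota L$ gives
\begin{align*}
\omega\, N'(\phi) = P'(\omega^{-1}\phi)\iota = {\bf F}'(X(\omega^{-1}\phi))\, N(\phi) - N(\phi)\, L,
\end{align*}
which is exactly the PDE $\partial_\omega N + N L = ({\bf F}'\circ e_0)\, N$ required by property \textit{ii)}. For transversality \textit{i)}, note that $R\dot X(0) = 0$ implies $e^{tR}\dot X(0) = \dot X(0)$, hence $\dot X(t) = \Phi(t)\dot X(0) = P(t)\dot X(0)$. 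Since $\dot X(0) \notin V = \mathrm{im}\,\iota$ and $P(t)$ is invertible, $\dot X(t) \notin P(t)V = \mathrm{im}\, N(\omega t)$; combined with $\mathrm{im}\,e_0'(\phi) = \mathbb{R}\cdot \dot X(\omega^{-1}\phi)$ this yields $\mathbb{R}^M = \mathrm{im}\,e_0'(\phi) \oplus \mathrm{im}\, N(\phi)$.

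The main subtlety I anticipate is the existence of the logarithm $R$ that preserves the chosen splitting and, if one wants to stay real, the fact that a real invertible matrix may fail to have a real logarithm. This is why the excerpt's preceding remark allows $L$ to be complex; in a fully real treatment one would either double the period (replacing $T$ by $2T$, since $\Phi(T)^2$ always has a real logarithm) or combine complex-conjugate Jordan blocks into real block-diagonal form. Either route gives a valid, smooth fast fibre map, and the verification above goes through unchanged.
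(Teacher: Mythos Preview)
Your proof is correct and follows essentially the same Floquet-based approach as the paper: decompose $\Phi(t)=P(t)e^{tR}$, identify the zero Floquet exponent attached to $\dot X(0)$, restrict to a complementary hyperbolic invariant subspace via an injection, and verify the PDE of Lemma~\ref{variationalparametrisation} together with transversality using invertibility of $P(t)$. The only differences are cosmetic: the paper first writes down an unrestricted $\tilde N(\phi)=P(\omega^{-1}\phi)$ and then passes to $N(\phi)=P(\omega^{-1}\phi)A$ with $\mathrm{im}\,A=\mathrm{im}\,B$, whereas you build the restricted map directly by first fixing the invariant complement $V$ and then choosing the logarithm $R$ block-diagonally; both routes produce the same formula~\eqref{fastmapperiodic} and the same $L$.
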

%\begin{thr}
%There exist a family of injective matrices $N_0: \mathbb{T}^1 \to L(\mathbb{R}^{k-1}, \mathbb{R}^k)$ and a constant matrix $M_0: \mathbb{R}^{k-1}\to \mathbb{R}^{k-1}$ such that the immersion 
%$$\Gamma: \mathbb{T} \times \mathbb{R}^{k-1} \to \mathbb{R}^k \times \mathbb{R}^{k}$$ 
%defined by
%$$(\phi, w) \mapsto (\gamma(\omega^{-1} \phi), N_0(\phi)\cdot w)\, $$
% sends solutions of the skew product extension  
%$$\dot \phi = \omega\, , \ \dot w = M_0 \cdot w$$
%defined on $\mathbb{R}/2\pi \mathbb{Z} \times \mathbb{R}^{k-1}$ to integral curves of the variational vector field $T \Omega$ on $\mathbb{R}^k\times \mathbb{R}^k$.
%\end{thr}
\begin{proof}
Assume that the ODE 
$\dot x = {\bf F}(x)\ \mbox{on}\ \mathbb{R}^{M}$ 
possesses a hyperbolic periodic orbit 
$X = X(t)$ with minimal period $T>0$. We think of it as an invariant circle $\mathbb{T}_0$ embedded by the map $e_0: \mathbb{R}/2\pi\mathbb{Z} \to \mathbb{R}^M$ defined by 
$e_0(\phi) := X(\omega^{-1} \phi)$, where $\omega:=\frac{2\pi}{T}$. 
Let $\Phi = \Phi(t) \in {\rm GL}(\mathbb{R}^{M})$ be the principal fundamental matrix solution of the linearisation around this periodic orbit.
%$$\dot v= D\Omega(\gamma(t))\cdot v\, .$$
This means that
$$ \dot \Phi(t) = {\bf   F}'(X(t))\cdot \Phi(t) \  \mbox{and}\ \Phi(0)= {\rm Id}_{\mathbb{R}^M}     \, .$$
Floquet's theorem  \cite{floquet, teschl} states that $\Phi(t)$ admits a factorisation
$$\Phi(t) = P(t) e^{Bt}\ \mbox{with}\ P(t+T) = P(t)\ \mbox{and}\ P(0)= {\rm Id}_{\mathbb{R}^M} \, .$$ 
The constant (and perhaps complex) Floquet matrix  $B$ satisfies $e^{BT} = \Phi(T)$, for example $B=\frac{1}{T}\log \Phi(T)$ for a choice of matrix logarithm. Note that a matrix logarithm of $\Phi(T)$ exists because $\Phi(T)$ is invertible. We shall assume here that $B$ is a real matrix. This can always be arranged by replacing $T$ by $2T$ and considering a double cover of $\mathbb{T}_0$ if necessary, but we ignore this (somewhat annoying) subtlety here. 

Substituting the Floquet decomposition in the definition of the fundamental matrix solution, we obtain that $\dot P(t) e^{Bt} + P(t) B e^{Bt} = {\bf   F}'(X(t)) P(t) e^{Bt}$. Thus,  
$$\dot P(t)+ P(t) B = { \bf  F}'(X(t)) P(t)  \, .$$
This implies that we found a solution to Equation \eqref{N0M0} in Lemma \ref{variationalparametrisation}.  Indeed, if we define  
$$\tilde L=B \ \mbox{and}\ \tilde N(\phi) = P(\omega^{-1}\phi) \, ,  $$
then we have, recalling  that $e_0(\phi)=X(\omega^{-1}\phi)$), 
\begin{align} \nonumber
\partial_{\omega}\tilde N(\phi) + \tilde N (\phi)\cdot \tilde L & = \tilde N'(\phi)\cdot  \omega + \tilde N(\phi) \cdot \tilde L  \\ \nonumber
& = \dot P(\omega^{-1} \phi) + P(\omega^{-1}\phi) \cdot B \\ \nonumber
& =  {\bf   F}'(X(\omega^{-1}\phi)) \cdot P(\omega^{-1}\phi)  = { \bf F}'(e_0(\phi))\cdot \tilde N(\phi)\, .
\end{align}
However, this does not yet prove that the periodic orbit is reducible, because $\tilde N=\tilde N(\phi)$ defines a family of $M\times M$-matrices, and hence the image of $\tilde N(\phi)$ is not normal to the tangent vector $\omega e_0'(\phi) = \dot X(\omega^{-1}\phi) $ to the periodic orbit. 
To resolve this issue, recall that $\Phi(T)$ always has a unit eigenvalue. This follows from 
 differentiating the identity $\dot X(t) = {\bf   F}(X(t))$ to $t$, which gives that $\frac{d}{dt} \dot X(t) = {\bf  F}'(X(t))\cdot  \dot X(t)$, so that 
$$\dot X(0)  = \dot X(T) =  \Phi(T)\cdot \dot X(0)\, .$$ 
Because $\Phi(T) = e^{BT}$, we conclude that $B$ has a purely imaginary eigenvalue in $\frac{2\pi i }{T} \mathbb{Z}$. Our assumption that $X$ is hyperbolic implies that none of the other eigenvalues of $B$ lie on the imaginary axis. Because $B$ is real and its eigenvalues must thus come in complex conjugate pairs, we conclude that the purely imaginary eigenvalue of $B$ must in fact be zero.  

We now choose an injective linear map $A: \mathbb{R}^{M-1}\to \mathbb{R}^{M}$ whose image coincides with the $(M-1)$-dimensional image of $B$. For any such choice of $A$ there is a unique map $L: \mathbb{R}^{M-1}\to\mathbb{R}^{M-1}$ for which
$$A \cdot L = B \cdot A\, . $$
Clearly, the eigenvalues of $L$ are the nonzero eigenvalues of $B$, showing that $L$ is hyperbolic. We also define $N: \mathbb{R}/2\pi\mathbb{Z} \to \mathcal{L}(\mathbb{R}^{M-1}, \mathbb{R}^{M})$ by
\begin{align}\label{fastmapperiodic}
N(\phi) := P(\omega^{-1}\phi) A \, .
\end{align}
By definition, ${\rm im}\, N(0)={\rm im}\, A = {\rm im}\, B$ is transverse to the tangent vector $\dot X(0) \in \ker B$ to the periodic orbit. Because each $P(t)$ is invertible, this transversality persists along the entire orbit. Indeed, writing $t = \omega^{-1}\phi$, note that 
$$ \dot X(t) = \Phi(t) \dot X(0) = P(t) e^{Bt} \dot X(0) =  P(t) \dot X(0) \in P(t)({\rm ker}\, B)$$ 
is transversal to ${\rm im}\, N(\phi) = {\rm im}(P(t)A) = P(t) ({\rm im}\, B)$. Finally, we compute 
\begin{align} \nonumber
\partial_{\omega}N(\phi) + N(\phi)L & = N'(\phi)\,  \omega + N(\phi)L   \\ \nonumber
& = \dot P(\omega^{-1} \phi) A + P(\omega^{-1}\phi) A L \\ \nonumber
& = \dot P(\omega^{-1} \phi) A + P(\omega^{-1}\phi) B A
\\ \nonumber
& =  {\bf F}'(X(\omega^{-1}\phi)) P(\omega^{-1}\phi)A  = {\bf  F}'(e_0(\phi)) N(\phi)\, .
\end{align}
This proves that the invariant circle $\mathbb{T}_0$ defined by $X(t)$ is reducible.
\end{proof}

\begin{ex}\label{exstuartlandau}
As an example consider a single Stuart-Landau oscillator
\begin{align} \label{stuartlandau}
\dot z  = \left(\alpha + i \beta \right) z +  \left(\gamma + i \delta  \right) |z|^2  z \quad \mbox{for}\ z \in \mathbb{C} \cong \mathbb{R}^2 \, .
\end{align}
%Here $0< \varepsilon \ll 1$ is a small parameter, much smaller than the other constants. The unperturbed system consisting of uncoupled Hopf normal forms (or Stuart-Landau oscillators) is completely unjustified, but computationally convenient.
Here $\alpha, \beta, \gamma, \delta \in \mathbb{R}$ are parameters. We assume that $\alpha \gamma < 0$ and $\alpha \delta - \beta\gamma \neq 0$, so that \eqref{stuartlandau} possesses a unique (up to rotation) circular periodic orbit 
$$X(t) = R e^{i\omega t} \ \mbox{where}\ R := \sqrt{-\alpha /\gamma } \ \mbox{and} \ \omega  := \beta - \alpha \delta / \gamma \neq 0\, .$$ 
Thus, the embedding 
$$e_0: \mathbb{R}/2\pi\mathbb{Z} \ni \phi  \mapsto z := R \, e^{i \phi} \in \mathbb{C} $$
sends solutions of $\dot \phi = \omega$  on $\mathbb{R}/2\pi\mathbb{Z}$ to solutions of \eqref{stuartlandau}.
The Floquet decomposition of the fundamental matrix solution around this periodic orbit can be found by anticipating that $P(t)=e^{i\omega t}$ and thus making the ansatz 
$$\Phi(t) = e^{i\omega t} e^{Bt}\, $$
for an unknown linear map $B: \mathbb{C} \to \mathbb{C}$. 
With this in mind we expand solutions to \eqref{stuartlandau} nearby the periodic orbit as 
$$z(t)=R\, e^{i\omega t} + \varepsilon \, e^{i\omega t} v(t) \, .$$
To first order in $\varepsilon$ this gives the  linear differential equations
% where $W = W_1(t)+ i v_2(t)$. We find that 
$$\dot v = \dot v_1 + i \dot v_2 = 2 R^2 (\gamma + i \delta) v_1\, ,$$
%or equivalently
%$$\left( \begin{array}{c} \dot v_1 \\ \dot v_2 \end{array} \right) = 2R^2 \left( \begin{array}{cc} \beta  & 0 \\ \delta & 0 \end{array} \right) \left( \begin{array}{c}   v_1 \\   v_2 \end{array} \right) \, .$$
which shows that the Floquet map $B: \mathbb{C} \to \mathbb{C}$ must be given by
$$B(v_1 + i v_2)= 2 R^2 (\gamma + i \delta) v_1 \, . $$  
This $B$ has an eigenvalue $0$ (with eigenvector $i$ corresponding to the tangent space to the invariant circle) and an eigenvalue $2\gamma R^2 = - 2 \alpha \neq 0$ (with eigenvector $\gamma +i\delta$). 
We conclude that the map
$${\bf N}e_0: (\phi, u) \mapsto (R e^{i\phi}, e^{i\phi} (\gamma + i \delta) u) \ \mbox{from}\ \mathbb{R}/2\pi\mathbb{Z} \times \mathbb{R} \ \mbox{to}\ \mathbb{C}\times \mathbb{C}$$
sends solutions of 
$$\dot \phi = \omega\, , \, \dot u = - 2\alpha \, u \ \mbox{for} \ \phi\in\mathbb{R}/2\pi\mathbb{Z} \ \mbox{and}\ u \in \mathbb{R} $$
to solutions of the linearised dynamics of \eqref{stuartlandau} on $\mathbb{C}\times \mathbb{C}$ around the invariant circle. In particular, we have $L= - 2 \alpha$ and 
$N(\phi)= e^{i\phi} ( \gamma + i \delta )$. The projection onto the tangent bundle of the invariant circle along its fast fibre bundle is given by the formulas
$$\pi(0)\cdot (x + i y) = i (y -  (\delta / \gamma) x)\ \mbox{and}\ \pi(\phi) = e^{i\phi}\cdot \pi(0)\cdot e^{-i\phi} \, .$$
Indeed, it is easy to check that $\pi(\phi)\cdot i e^{i\phi} = i e^{i\phi}$ and $\pi(\phi)\cdot e^{i\phi} ( \gamma + i \delta )  = 0$. 
\end{ex} 
\noindent 
%Recall that an uncoupled multiple oscillator system consists of ODEs 
%\begin{align} \label{uncoupledoscillators}
%\dot x_j = \Omega_j(x_j) \ \mbox{for}\ x_j \in \mathbb{R}^{k_j} \ \mbox{and}\ j=1, \ldots, N\, .
%\end{align}
We now extend the result of Theorem \ref{floquetreducible} to systems of multiple uncoupled oscillators, that is, systems of the form
$$\dot x_1 = F_1(x_1)\, , \, \ldots\, ,\,  \dot x_m = F_m(x_m)\, \ \mbox{with}\ x_j\in \mathbb{R}^{M_j}\, ,$$
that each have a hyperbolic $T_j$-periodic orbit $X_j(t)$. Recall that the product of these periodic orbits forms an invariant torus. The fact that this torus is reducible follows from the following lemma. Its proof is straightforward, but included here for completeness.
\begin{lem} \label{productreduciblelemma}
Let $\mathbb{T}_1\subset \mathbb{R}^{M_1}$ and $\mathbb{T}_2\subset \mathbb{R}^{M_2}$ be embedded reducible normally hyperbolic (quasi-)periodic invariant tori for the vector fields ${\bf F}_1$ and ${\bf F}_2$ respectively. Then the product torus $\mathbb{T}_0:=\mathbb{T}_1\times \mathbb{T}_2 \subset  \mathbb{R}^M$ (with $M:=M_1+M_2$) is an embedded reducible normally hyperbolic quasi-periodic invariant torus for the product vector field ${\bf F}_0$ on $\mathbb{R}^{M}$ defined by ${\bf F}_0(x_1, x_2) :=({\bf F}_1(x_1), {\bf F}_2(x_2))$.
\end{lem}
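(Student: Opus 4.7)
The plan is to construct all the required data for the product torus by pairing, block-diagonally, the data coming with $\mathbb{T}_1$ and $\mathbb{T}_2$. By hypothesis there exist embeddings $e_0^{(i)}:(\mathbb{R}/2\pi\mathbb{Z})^{m_i}\to\mathbb{R}^{M_i}$ with frequency vectors $\omega^{(i)}\in\mathbb{R}^{m_i}$ satisfying $\partial_{\omega^{(i)}} e_0^{(i)} = {\bf F}_i\circ e_0^{(i)}$, together with fast fibre maps determined by smooth families of injective linear maps $N_i(\phi^{(i)})\in\mathcal{L}(\mathbb{R}^{M_i-m_i},\mathbb{R}^{M_i})$ and hyperbolic Floquet matrices $L_i$, for $i=1,2$. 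Setting $m:=m_1+m_2$, I would define
$$e_0(\phi^{(1)},\phi^{(2)}) := (e_0^{(1)}(\phi^{(1)}),\, e_0^{(2)}(\phi^{(2)})),\qquad \omega:=(\omega^{(1)},\omega^{(2)}),$$
$$N(\phi^{(1)},\phi^{(2)}) := \begin{pmatrix} N_1(\phi^{(1)}) & 0 \\ 0 & N_2(\phi^{(2)}) \end{pmatrix},\qquad L:=\begin{pmatrix} L_1 & 0 \\ 0 & L_2 \end{pmatrix},$$
as the candidate embedding, frequency vector, normal map and Floquet matrix for $\mathbb{T}_0$.

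Next, I would verify in turn the properties needed to match Definition \ref{reducibledefi}, noting that each is forced by the block structure. First, the semi-conjugacy $\partial_\omega e_0 = {\bf F}_0\circ e_0$ is immediate: the $i$-th block of $e_0$ depends only on $\phi^{(i)}$, so $\partial_\omega$ reduces to $\partial_{\omega^{(i)}}$ on that block and yields ${\bf F}_i\circ e_0^{(i)}$ by assumption. Second, transversality $\mathbb{R}^M=\mathrm{im}\,e_0'(\phi)\oplus\mathrm{im}\,N(\phi)$ follows because $e_0'$ and $N$ are block-diagonal with respect to the splitting $\mathbb{R}^M=\mathbb{R}^{M_1}\oplus\mathbb{R}^{M_2}$, so the decomposition of $\mathbb{R}^M$ is just the direct sum of the two transversality statements inherited from $\mathbb{T}_1$ and $\mathbb{T}_2$. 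Third, in place of property $ii)$ of Definition \ref{reducibledefi} I would verify the equivalent PDE from Lemma \ref{variationalparametrisation}, namely $\partial_\omega N + N\cdot L = ({\bf F}_0'\circ e_0)\cdot N$; since ${\bf F}_0$ decouples, ${\bf F}_0'\circ e_0$ is block-diagonal with diagonal blocks ${\bf F}_i'\circ e_0^{(i)}$, and the PDE reduces block-wise to the two PDEs already satisfied by the pairs $(N_i,L_i)$. Fourth, the spectrum of the block-diagonal $L$ is $\sigma(L_1)\cup\sigma(L_2)$, which avoids the imaginary axis by hyperbolicity of each $L_i$, so $L$ is hyperbolic and $\mathbb{T}_0$ is normally hyperbolic.

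There is really no substantive obstacle here: the product structure of ${\bf F}_0$ forces every relevant object to decouple across the two factors, and the definitions are tailored so that each ingredient of reducibility (embedding, fast fibre map, Floquet matrix, transversality, hyperbolicity) transports from factors to products. The only minor point worth flagging is the word ``quasi-periodic'' in the conclusion: the combined frequency vector $\omega=(\omega^{(1)},\omega^{(2)})$ generates at worst a quasi-periodic motion on $\mathbb{T}_0$ (and only a periodic one when all entries of $\omega$ are rationally commensurate), which is consistent with the terminology used throughout the paper.
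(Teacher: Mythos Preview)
Your proof is correct and follows essentially the same approach as the paper: define all the product data block-diagonally and verify the semi-conjugacy, the PDE of Lemma~\ref{variationalparametrisation}, and hyperbolicity of $L$ component-wise. If anything, you are slightly more explicit than the paper in checking the transversality condition $\mathbb{R}^M=\mathrm{im}\,e_0'(\phi)\oplus\mathrm{im}\,N(\phi)$, which the paper leaves implicit.
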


\begin{proof}
Assume that $e_j:(\mathbb{R}/2\pi\mathbb{Z})^{m_j}\to \mathbb{R}^{M_j}$ (for $j=1, 2$) is an embedding of a reducible normally hyperbolic  (quasi-)periodic invariant torus for the vector field ${\bf F}_j$. This means that there are  frequency vectors $\omega_j\in \mathbb{R}^{m_j}$ such that 
$\partial_{\omega_j}e_j = {\bf F}_j \circ e_j$ and fast fibre maps 
 ${\bf N}e_j: (\mathbb{R}/2\pi\mathbb{Z})^{m_j} \times \mathbb{R}^{M_j-m_j} \to \mathbb{R}^{M_j} \times \mathbb{R}^{M_j}$ of the form ${\bf N}e_j(\phi_j, u_j) = (e_j(\phi_j), N_j(\phi_j)\cdot u_j)$ satisfying $\partial_{\omega_j}N_j + N_j \cdot L_j = ( {\bf F}_j'\circ e_j) \cdot N_j$ for certain hyperbolic Floquet matrices $L_j$.

If we now define $m:=m_1+m_2$, $\omega:=(\omega_1, \omega_2)\in \mathbb{R}^{m}$ and $e_0: (\mathbb{R}/2\pi\mathbb{Z})^{m} \to \mathbb{R}^{M}$ by $e_0(\phi) = e_0(\phi_1, \phi_2) := (e_1(\phi_1), e_2(\phi_2))$, then $e_0$ is clearly an embedding of $\mathbb{T}_0$ and the equality $\partial_{\omega}e_0 = {\bf F}_0 \circ e_0$ holds. In other words, the product torus $\mathbb{T}_0$ is an embedded quasi-periodic invariant torus for ${\bf F}_0$.

If we also define $N(\phi) \cdot u = N(\phi_1, \phi_2) \cdot (u_1, u_2) := (N_1(\phi_1)\cdot u_1, N_2(\phi_2)\cdot u_2)$, then clearly $N(\phi)$ is injective, and therefore ${\bf N}e_0: (\mathbb{R}/2\pi\mathbb{Z})^{m} \to \mathbb{R}^M \times \mathbb{R}^M$ defined by 
$${\bf N}e_0((\phi_1, \phi_2), (u_1, u_2)) = (e_0(\phi_1, \phi_2), N(\phi_1, \phi_2)\cdot(u_1, u_2))$$ is a fast fibre map for $\mathbb{T}_0$ that satisfies  
$\partial_{\omega}N + N \cdot L = ({\bf F}_0' \circ e_0)\cdot N$. Here  $L: \mathbb{R}^{M-m}\to \mathbb{R}^{M-m}$ is defined by $L(u_1, u_2) := (L_1 u_1, L_2 u_2)$. This $L$ is hyperbolic, its eigenvalues being those of $L_1$ and $L_2$. This proves that $\mathbb{T}_0$ is reducible and normally hyperbolic and concludes the proof of the lemma.
\end{proof}

\begin{comment}
\begin{remk}
If $B$ defined by $\Phi(T) = e^{BT}$ is not a real matrix, then we may instead define $P(t)$ by the equation $\Phi(t) = P(t) e^{\frac{(B+\bar B)}{2}t}$. Then $\Phi(T)^2 = \Phi(2T) = P(2T)e^{(B+\bar B)T} = P(2T)( e^{BT} e^{\bar B T}) = P(2T)(e^{BT})^2$. It follows that $P(2T)=\mbox{Id} = P(0)$ because $\Phi(T)=e^{BT}$.
\end{remk}
\end{comment}
\begin{comment}

\begin{ex} 
The Ansatz $$z_j = R_j e^{i\phi_j}$$ leads to the equations
$$\dot R_j + i R_j \dot \phi_j = \left( \mu_jR_j  + \alpha_jR_j^3 \right) + i (\omega_j R_j+\beta_j R_j^3)  + \varepsilon  \sum_{k} R_k \left(A_{jk} e^{i(\phi_k-\phi_j)} + B_{jk} e^{-i(\phi_k+ \phi_j)} \right) $$
or equivalently
\begin{align} \nonumber
\dot R_j & = \mu_jR_j  + \alpha_j R_j^3  + \varepsilon \sum_k R_k \, {\rm Re}\,    \left(A_{jk} e^{i(\phi_k-\phi_j)} + B_{jk} e^{-i(\phi_k+ \phi_j)} \right) 
\\ \nonumber
\dot \phi_j & = \omega_j +\beta_j R_j^2 + \varepsilon R_j^{-1} \sum_k R_k \ {\rm Im}\, \left(A_{jk} e^{i(\phi_k-\phi_j)} + B_{jk} e^{-i(\phi_k+ \phi_j)} \right) 
\end{align}
%{\bf Should be $\dot \phi_j = \omega_j + c R_j^2 + \varepsilon\ldots$.}
These real and imaginary parts in the perturbation part are clearly linear expressions in sine and cosine of $\phi_k\pm \phi_j$. 
\end{ex}

\end{comment}

\section{Application to remote synchronisation} \label{sec:examplesection}
In this final section we apply and illustrate our phase reduction method in a small network of three weakly linearly coupled Stuart-Landau oscillators
\begin{align}\label{3stuartlandau}
\begin{array}{llll}
\dot{z}_1  = &  \hspace{-2mm} (\alpha + i\beta)z_1  &  \hspace{-2mm} +   \hspace{1mm} (\gamma + i\delta)|z_1|^2z_1  &  \hspace{-2mm} +  \hspace{1mm}  \varepsilon z_2 \, , \\ 
\dot{z}_2  = &  \hspace{-2mm} (a \, + \, \! ib)z_2  &  \hspace{-2mm} +  \hspace{1mm}  (c + id)|z_2|^2z_2   &   \hspace{-2mm} + \hspace{1mm}   \varepsilon z_1 \,  , \\
\dot{z}_3  =  &  \hspace{-2mm} (\alpha + i\beta)z_3  &  \hspace{-2mm} + \hspace{1mm}  (\gamma + i\delta)|z_3|^2z_3  &  \hspace{-2mm} +\hspace{1mm}   \varepsilon z_2  \, ,
\end{array}
\end{align}
with $z_1, z_2, z_3 \in \mathbb{C}$. Figure \ref{pictoscnetw} depicts the coupling architecture of this network. Note that the first and third oscillator in equations \eqref{3stuartlandau} are identical. We choose parameters so that each uncoupled oscillator  has a nonzero hyperbolic periodic orbit, with frequencies $\omega_1=\omega_3 \neq \omega_2$.  These periodic orbits form a $3$-dimensional invariant torus $\mathbb{T}_0$ for the  uncoupled system, which  persists as a perturbed torus $\mathbb{T}_{\varepsilon}$ for small nonzero coupling.  

Despite that fact that the first and third oscillators in \eqref{3stuartlandau} are not coupled directly, 
a numerical study of equations \eqref{3stuartlandau}   reveals that these oscillators synchronise when appropriate parameter values are chosen, see Figure \ref{firstnum}. This ``remote synchronisation'' appears to be mediated by the second oscillator, which allows the two other oscillators to communicate. Figure \ref{firstnum-c} demonstrates, again numerically, that the timescale of remote synchronisation is of the order $t\sim \varepsilon^{-2}$. This suggests that proving the synchronisation rigorously would require second-order phase reduction.

In \cite{remotestar}, remote synchronisation of Stuart-Landau oscillators was observed numerically for the first time.  A first rigorous proof of the phenomenon, for a chain of three Stuart-Landau oscillators, occurs in \cite{rosenblum}. The proof in that paper employs the high-order phase reduction method developed in \cite{Gengel}. However, the method in \cite{Gengel} does not yield the reduced phase equations in normal form. As a result, the  timescale $t\sim \varepsilon^{-2}$ is not observed in \cite{rosenblum}.

Here we apply the parametrisation method developed in this paper, to prove that the first and third oscillator in \eqref{3stuartlandau} synchronise over a  timescale $t\sim \varepsilon^{-2}$. 
 We are also able to determine how the parameters in \eqref{3stuartlandau} influence this synchronisation.  To this end, we will compute an asymptotic expansion of an embedding $e:(\mathbb{R}/2\pi\mathbb{Z})^3 \to \mathbb{C}^3$ and a 
 reduced phase vector field ${\bf f}: (\mathbb{R}/2\pi\mathbb{Z})^3 \to \mathbb{R}^3$ to second order in the small parameter. As we are primarily interested in the synchronisation of the first and third oscillator, we do not calculate the full reduced phase vector field. Instead, we only explicitly compute an evolution equation for the resonant combination angle $\Phi:=\phi_1 -  \phi_3$.  
%\begin{align} \nonumber 
%\frac{d}{dt}(\phi_1-\phi_3)  & = f^{(1)}(\phi) - f^{(3)}(\phi)  \\ \label{3cellexpansion}
% &  = \varepsilon \left( f_1^{(1)}(\phi) - f_1^{(3)}(\phi) \right) +  \varepsilon^2 \left( f_2^{(1)}(\phi) - f_2^{(3)}(\phi) \right) + \mathcal{O}(\varepsilon^3) 
%\end{align}
%to second order. 
%Computing this specific  expression turns out to be relatively simple due to the presence of a discrete symmetry, see Proposition \ref{3sym} below.  
We will show that 
\begin{align}\label{phi1minphi3}
\dot \Phi  = \varepsilon^2 \left( -A \sin \Phi  +B (1 -  \cos \Phi )  \right) + \mathcal{O}(\varepsilon^3)  \, ,
\end{align}
in which the constants $A$ and $B$ are given by the formulas
  \begin{align}\label{AB} \begin{array}{ll}
 A & \!\!\! = \frac{1}{4a^2+(\omega_1-\omega_2)^2} \left( \frac{\delta}{\gamma}(\omega_2-\omega_1) + a \left( 1 + 
\frac{d \delta}{c \gamma} \right)   + 2a^2 \left(\frac{d}{c} + \frac{\delta}{\gamma} \right)   \frac{1}{\omega_2-\omega_1} \right) \, ,
 \\ 
 B  & \!\!\! = \frac{1}{4a^2+(\omega_1-\omega_2)^2}\left( (\omega_2-\omega_1) + a \left(\frac{d}{c} - 
\frac{\delta}{\gamma} \right)   + 2a^2 \left(1 - \frac{d \delta}{c\gamma} \right)   \frac{1}{\omega_2-\omega_1}  \right) \, .
\end{array} 
 \end{align}

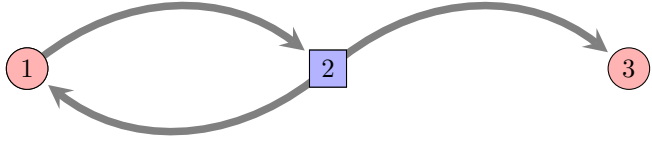
\begin{figure}[h] 
\centering{
\begin{tikzpicture}
	\node[circle,draw=black, fill=red!30, fill opacity = 1, inner sep=2.7pt, minimum size=14pt] (1) at (0,0) {$1$};
	\node[rectangle,draw=black, fill=blue!30, fill opacity = 1, inner sep=2.7pt, minimum size=14pt] (2) at (4,0) {$2$};
	\node[circle,draw=black, fill=red!30, fill opacity = 1, inner sep=2.7pt, minimum size=14pt] (3) at (8,0) {$3$};
	\draw [->,  >=stealth,  black!50, shorten <=-2pt, shorten >=2pt, 	line width=1mm] (1) to [bend right = -37.9] (2);
	\draw [->,  >=stealth,  black!50, shorten <=-2pt, shorten >=2pt, 	line width=1mm] (2) to [bend right = -37.9] (1);
	\draw [->,  >=stealth,  black!50, shorten <=-2pt, shorten >=2pt, 	line width=1mm] (2) to [bend right = -37.9] (3);
	\node[rectangle,draw=black, fill=blue!30, fill opacity = 1, inner sep=2.7pt, minimum size=14pt] (2c) at (4,0) {$2$};
	\node[circle,draw=black, fill=red!30, fill opacity = 1, inner sep=2.7pt, minimum size=14pt] (1c) at (0,0) {$1$};
\end{tikzpicture}
\caption{Representation of the network of  Stuart-Landau oscillators \eqref{3stuartlandau}.}
\label{pictoscnetw}}
\end{figure}
\noindent 
% \subsubsection*{Remote synchronisation}
Before we prove formulas  \eqref{phi1minphi3} and \eqref{AB}, let us investigate their dynamical implications. After rescaling time $t\mapsto \tau := \varepsilon^{2} t$,   equation  \eqref{phi1minphi3} becomes
 $$\frac{d\Phi}{d\tau}  = \left( -A \sin \Phi   + B(1- \cos \Phi )  \right) + \mathcal{O}(\varepsilon)  \, .$$
For $\varepsilon=0$ the time-rescaled reduced flow on $(\mathbb{R}/2\pi\mathbb{Z})^3$  therefore admits a $2$-dimensional invariant torus 
$$S = \{\phi_1=\phi_3\} \subset (\mathbb{R}/2\pi\mathbb{Z})^3$$ 
on which the phases of the first and third oscillator   are synchronised. This torus is stable when $A>0$ and unstable when $A<0$. For $A\neq 0$, there also exists exactly one $2$-dimensional invariant torus of the form 
$$P = \{\phi_1 = \phi_3 + c\}\subset (\mathbb{R}/2\pi\mathbb{Z})^3 \ \mbox{for some}\ c\neq 0\,  $$
with the opposite stability type. The phases of the first and third oscillator are phase-locked but not synchronised on $P$.  F\'enichel's theorem guarantees that both $S$ and $P$ persist as invariant submanifolds of $(\mathbb{R}/2\pi\mathbb{Z})^3$ for small $\varepsilon\neq 0$. Hence, so do their images $e(S), e(P) \subset \mathbb{T}_{\varepsilon}\subset \mathbb{C}^3$ as invariant manifolds for \eqref{3stuartlandau}. 

For small $\varepsilon \neq 0$, a typical solution of \eqref{3stuartlandau} will therefore first converge to the $3$-dimensional invariant torus $\mathbb{T}_{\varepsilon}$ on a timescale of the order $t \sim 1$.  It will subsequently converge to either $e(S)$ or $e(P)$ on the much longer timescale $t\sim \varepsilon^{-2}$, and it is this slow dynamics that governs the synchronisation of the first and third oscillator. This multiple timescale dynamical process is illustrated in Figure \ref{firstnum}. Figure \ref{firstnum-c} confirms numerically that the timescale of synchronisation of $z_1$ and $z_3$ is indeed of the order $\varepsilon^{-2}$.

 \begin{remk}\label{remk:easyshapeA}
 We point out that the parameters in \eqref{3stuartlandau} can be tuned so that either of the two low-dimensional tori $S$ or $P$ is the stable one. Assume for instance that $\alpha, a >0$ and $\gamma, c<0$, so that $\mathbb{T}_{0}$ (and hence $\mathbb{T}_{\varepsilon}$) is stable. If in addition we choose the parameters so that $c \delta + d \gamma = 0$, then the expression for $A$ simplifies to 
 $\frac{a + (b-\beta)(\delta/\gamma) + \alpha (\delta/\gamma)^2 }{4a^2 + (\omega_1-\omega_2)^2}$. If $\delta \neq 0$, then it is clear that we can make this both positive and negative, for instance by varying the parameter $b$. Interestingly, this shows  that  properties of the second oscillator may determine whether the first and third oscillator converge to the synchronised state $S$ or the phase-locked state $P$. 
% If we then choose $\delta= d = 0$, then 
% $A=\frac{a}{4a^2 + (\omega_1-\omega_2)^2} < 0$. This makes $S$ stable and $D$ unstable, and thus one will see the first and third oscillator phase-synchronising. 
%On the other hand, if $c=d$ and $\gamma = -\delta$ then $A= \frac{\omega_1 - \omega_2}{4a^2 + (\omega_1-\omega_2)^2} $ with $\omega_1 = \beta + \alpha$ and $\omega_2=  b - a$. By varying $\beta$ and $b$ we can make this $A$  both positive and negative, resulting in the convergence to $S$ and $D$ respectively.

\end{remk}

\subsubsection*{Numerics}

\begin{figure}[h!]
\begin{subfigure}{0.48\linewidth}
\centering
\includegraphics[width=\linewidth]{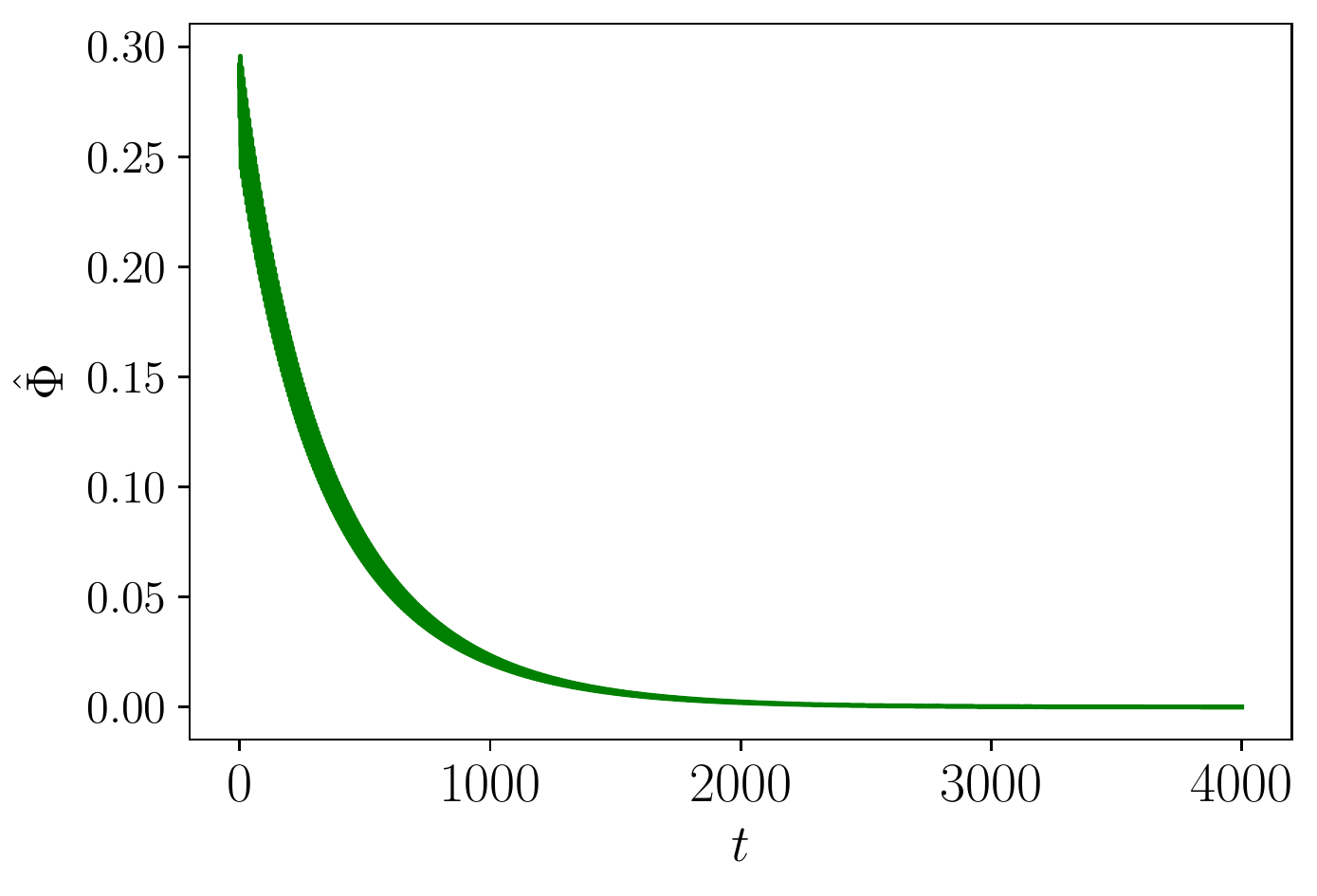}
\caption{Slow convergence of $\hat \Phi$ to zero.}
\label{firstnum-a}
\end{subfigure}\hfill %\par\medskip
\begin{subfigure}{0.48\linewidth}
\centering
\includegraphics[width=\linewidth]{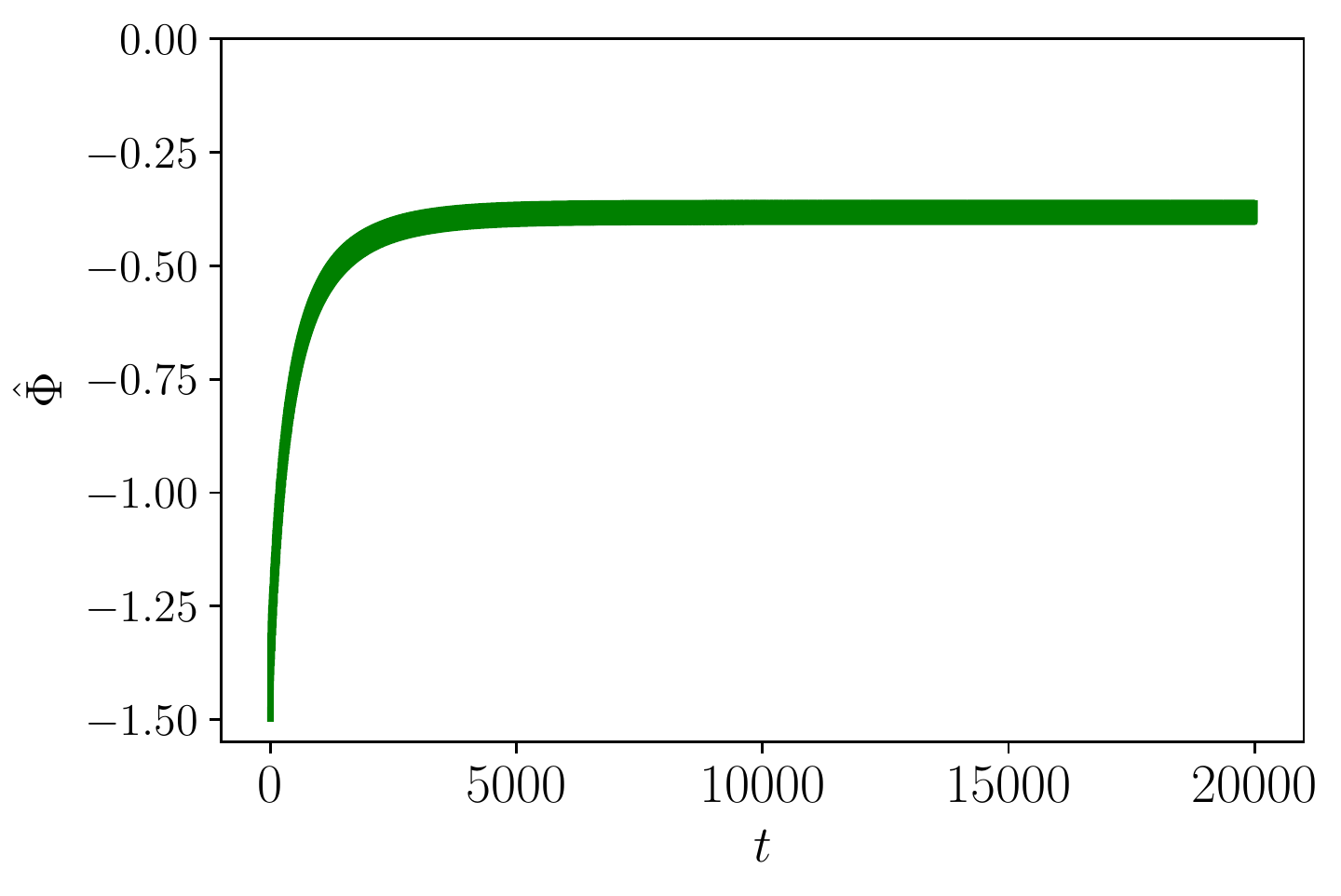}
\caption{Convergence of $\hat \Phi$ to a non-zero value.}
\label{firstnum-b}
\end{subfigure}%
\caption{Numerically obtained plots of the phase-difference $\hat \Phi = \Arg(z_1\overline{z_3}) \approx \phi_1 - \phi_3$ against time, for two different realisations of system \eqref{3stuartlandau}. }
\label{firstnum}
\end{figure}

\noindent Before proving  \eqref{phi1minphi3} and \eqref{AB}, we present some numerical results on system \eqref{3stuartlandau}.   Figure \ref{firstnum} shows numerically obtained plots of $\hat \Phi = \Arg(z_1\overline{z_3})$ against time, for two different realisations of system \eqref{3stuartlandau}.  We use $\hat \Phi$ as a proxy for $\Phi = \phi_1 - \phi_3$. As this approximation does not take into account the distortion of the perturbed invariant torus, we observe small amplitude, rapid oscillations in $\hat \Phi$, causing the lines in Figure \ref{firstnum} to be  thick.
In Figure \ref{firstnum-a}, we have chosen the parameter values  
\begin{equation}\label{convergentvalues}
\begin{array}{llll}%
\alpha = 1  & \beta= 1  &  \gamma = -1 &  \delta = 1\, ;\\
a = 1  & b= 2  &  c = -1 &  d = -1\, ,
\end{array}
\end{equation}
together with $\varepsilon = 0.1$. 
It follows that $c \delta + d \gamma  = 0$, and so $A = \frac{1}{5} >0$, 
%\begin{align}
%A&= \frac{a + (b-\beta)(\delta/\gamma) + \alpha (\delta/\gamma)^2 }{4a^2 + (\omega_1-\omega_2)^2} = 
%\frac{1 + (2-1)(1/-1) + 1 (1/-1)^2 }{4a^2 + (\omega_1-\omega_2)^2}  \\ \nonumber
%&= 
%\frac{1 }{4a^2 + (\omega_1-\omega_2)^2}  > 0\, ,
%\end{align}
see Remark \ref{remk:easyshapeA}.
The  above analysis  therefore predicts that $\hat \Phi$ should converge to zero, which the figure indeed shows.
The convergence is very slow, as only around $t= 2000$ do we find that $\hat \Phi$ is indistinguishably close to zero. 
We will comment more on the rate of convergence below.
Figure \ref{firstnum-a} was generated using Euler's method with time steps of $0.05$, starting from the point in phase space $(z_1, z_2, z_3) = (-1,1+0.4i, -1+0.3i) \in \C^3$.
%We approximate the difference of the reduced phases by simply setting $\phi_1(t) - \phi_3(t) = \Arg(z_1(t)\overline{z_3(t)})$.

For Figure \ref{firstnum-b} we have likewise set $\varepsilon = 0.1$, but have instead chosen 
\begin{equation}
\begin{array}{llll}%
\alpha = 1  & \beta= 0.1  &  \gamma = -1 &  \delta = 1\, ;\\
a = 1  & b= 6  &  c = -1 &  d = -1\, ,
\end{array}
\end{equation}
which yields 
$
A = 
%\frac{1 + (6-0.1)(1/-1) + 1 (1/-1)^2 }{4a^2 + (\omega_1-\omega_2)^2}  
%= 
\frac{-3.9 }{4 + (3.9)^2} = -0.203\ldots  < 0$. 
Hence, our theory predicts $\hat \Phi$ to converge to a non-zero constant value, which is indeed seen to be the case. 
Again the thickness of the line is due to rapid oscillations.
Figure \ref{firstnum-b} is generated in the same way as Figure \ref{firstnum-a}, except that the starting point for Euler's method is now $(z_1, z_2, z_3)  = (1+0.3i,1+0.4i, -0.2+0.9i)$.

\begin{figure}[h!]
\centering
\includegraphics[width=.7\linewidth]{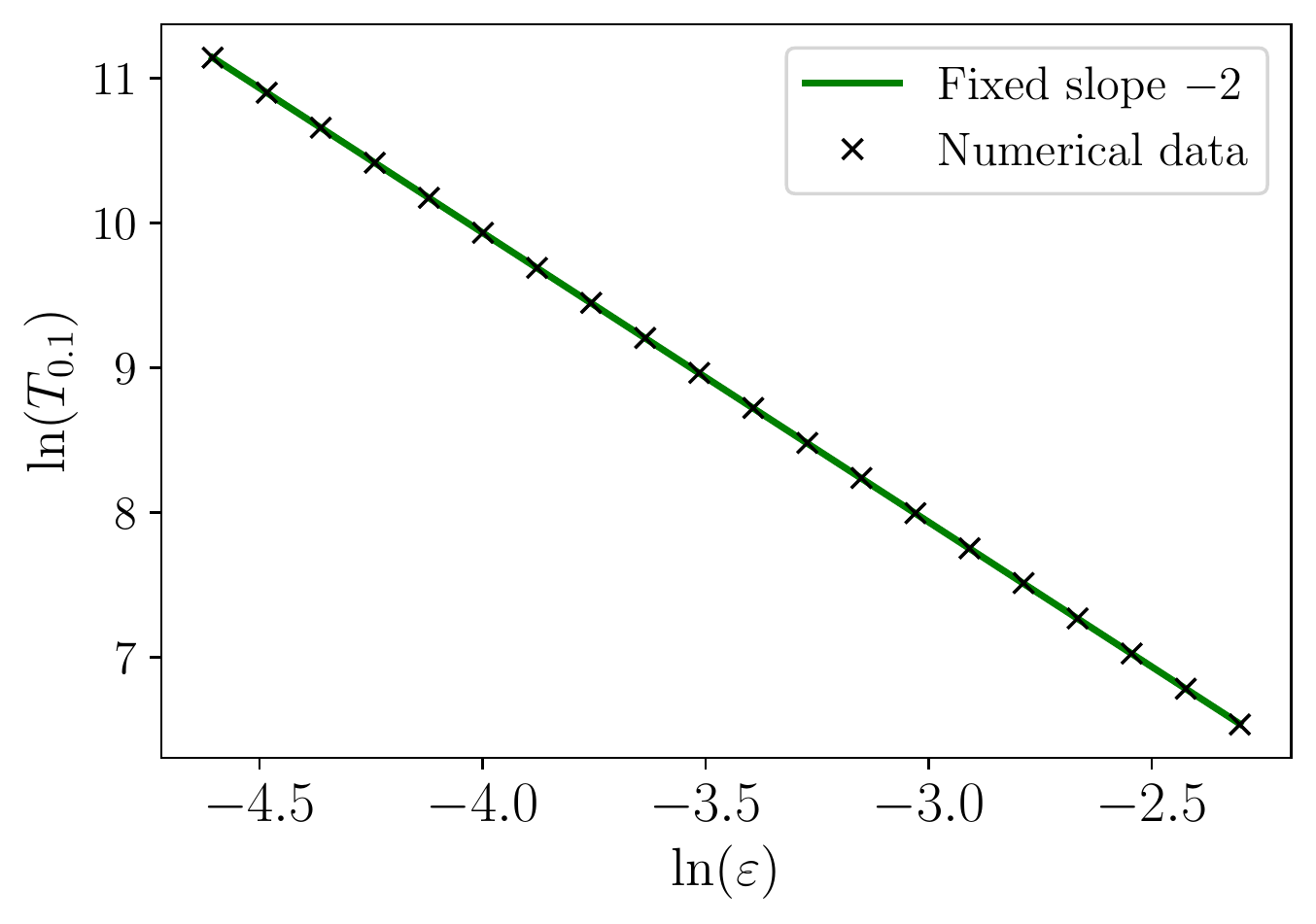}
\caption{Log-log plot of the time $T_{0.1}$ it takes for $\hat \Phi$ to decrease by a factor of $10$, against the coupling parameter $\varepsilon$.}
\label{firstnum-c}
\end{figure}

Finally, Figure \ref{firstnum-c} displays the rate of convergence to synchrony  as a function of $\varepsilon$. 
The figure was made using Euler's method with time-steps of $0.05$, all starting from the same point $(z_1, z_2, z_3)  = (-1+0.3i,1+0.4i, -1+0.5i)$.
 We have again chosen the parameters as in \eqref{convergentvalues}, so that we may expect $\hat \Phi$ to converge to zero.
However, the rate at which this occurs depends on $\varepsilon$. We measure this rate by recording $T_{0.1}$, which is the smallest time $t$ for which $|\hat \Phi(t)| \leq 0.1 |\hat \Phi(0)|$. 

Figure \ref{firstnum-c} shows a log-log plot of $T_{0.1}$ against $\varepsilon$. 
The crosses in the figure represent numerical results for $20$ different values of $\varepsilon$. 
Shown in green is the  line with slope $-2$  through the leftmost cross.
We see that 
$\ln(T_{0.1}) = -2\ln(\varepsilon) + C$ for some $C \in \R$ to very good approximation. 
Hence we find $T_{0.1} \sim \varepsilon^{-2}$, which is fully in agreement with our predictions.

\subsubsection*{Setup: the unperturbed problem}
We now start our proof of formulas \eqref{phi1minphi3} and \eqref{AB}. We first recall some observations from Example \ref{exstuartlandau}, and make assumptions on the parameters that appear in \eqref{3stuartlandau}. 
Specifically, we assume that these parameters  are chosen so that 
$$\alpha \gamma < 0, \ a c <0,\ \beta\gamma - \alpha \delta \neq 0, \ bc - a d \neq 0\,  \ \mbox{and}\ \omega_1 = \omega_3 \neq \omega_2\, .$$ 
Recall from Example \ref{exstuartlandau} that this ensures that all three uncoupled oscillators possess a unique hyperbolic periodic orbit, with nonzero frequencies $\omega_1 = \omega_3 = \beta - \alpha \delta / \gamma$ and $\omega_2 = b - a d / c \neq \omega_1$. The product of these periodic orbits forms a $3$-dimensional reducible normally hyperbolic (quasi-)periodic invariant torus $\mathbb{T}_0 \subset \mathbb{C}^3$. %This torus is exponentially attracting precisely when $\alpha, a>0$ (and hence $\gamma, c <0$).  
%which persists as an invariant manifold $\mathbb{T}_{\varepsilon}$ for $\varepsilon>0$. 
An embedding of $\mathbb{T}_0$ is given by 
$$e_0: (\mathbb{R}/2\pi\mathbb{Z})^3 \to \mathbb{C}^3 \ \mbox{defined by}\  e_0(\phi_1, \phi_2, \phi_3)  = (R_1 \, e^{i \phi_1}, R_2 \, e^{i \phi_2}, R_3 \, e^{i \phi_3})\,  $$ where $$R_1 = R_3 = \sqrt{-\alpha /\gamma }>0\ \mbox{and}\ R_2 = \sqrt{-a /c }>0\ . $$
This embedding sends integral curves of the constant vector field $\omega=(\omega_1, \omega_2, \omega_3)$ on $(\mathbb{R}/2\pi\mathbb{Z})^3$ 
to solutions of \eqref{3stuartlandau} (with $\varepsilon=0$) on $\mathbb{C}^3$. 
%We assume that the parameters are chosen so that $R_1, R_2, R_3 > 0$, $\omega_1, \omega_2, \omega_3 \neq 0$ and $\omega_1 = \omega_3 \neq \omega_2$. 

It follows from Example \ref{exstuartlandau} and Lemma \ref{productreduciblelemma} that a Floquet matrix for $\mathbb{T}_0$ is
$$L  = {\rm diag}(-2 \alpha, -2 a , -2\alpha)\, ,$$
 with corresponding fast fibre map given by the family of injective linear maps  $N: (\mathbb{R}/2\pi\mathbb{Z})^3 \to \mathcal{L}(\mathbb{R}^3, \mathbb{C}^3)$  defined by
 $$N(\phi_1, \phi_2, \phi_3)   = {\rm diag}(e^{i\phi_1} (\gamma + i \delta), e^{i\phi_2} (c + i d), e^{i\phi_3} (\gamma + i \delta))\, .$$   
The projection onto the tangent bundle along the fast fibre bundle is given by
$$\pi(\phi_1, \phi_2, \phi_3)  = {\rm diag}(e^{i\phi_1}\pi_1(0)e^{-i\phi_1}, e^{i\phi_2}\pi_2(0)e^{-i\phi_2}, e^{i\phi_3}\pi_3(0)e^{-i\phi_3})\, .$$
Here,
 $$\pi_1(0) (x_1+ i y_1 ) = i (y_1 - (\delta/\gamma)x_1)\, , \ \pi_2(0) (x_2+ i y_2 ) =  i (y_2 - (d/c)x_2)\, ,$$ 
 and $\pi_3(0)=\pi_1(0)$. 

\subsubsection*{The first tangential homological equation}
We now compute ${\bf f}_1$ and ${\bf g}_1$ from the first tangential homological equation, see \eqref{tworeducedequations1}, with $U_1$ as given in \eqref{tworeducedequations}. A short calculation  shows that the projection of the inhomogeneous term ${\bf G}_1(\phi) = {\bf F}_1(e_0(\phi)) = (R_2e^{i\phi_2}, R_1e^{i\phi_1}, R_2e^{i\phi_2})$ is 
 $$(\pi \cdot {\bf G}_1)(\phi) =  \left( \begin{array}{c} i R_2e^{i\phi_1} \left( \sin (\phi_2-\phi_1) - (\delta/\gamma) \cos (\phi_2-\phi_1) \right) \\ i R_1e^{i\phi_2} \left( \sin (\phi_1-\phi_2) - (d/c) \cos (\phi_1-\phi_2) \right) \\ i R_2e^{i\phi_3} \left( \sin (\phi_2-\phi_3) - (\delta/\gamma) \cos (\phi_2-\phi_3) \right)\end{array} \right)\, . $$
This is clearly in the range of $e_0'(\phi) = {\rm diag}(iR_1e^{i\phi_1}, iR_2e^{i\phi_2}, iR_3e^{i\phi_3})$.  Thus the first tangential homological equation becomes
%$$iR_1 e^{i\phi_1} (\partial_{(\omega_1, \omega_2, \omega_1)} X_1^{(1)} + f_1^{(1)} ) = i R_2e^{i\phi_1} \left( \sin (\phi_2-\phi_1) - (\delta/\beta) \cos (\phi_2-\phi_1) \right)$$
%etc.
%or equivalently
$$\partial_{\omega} {\bf g}_1(\phi) + {\bf f}_1(\phi)  =  U_1(\phi) = \left( \begin{array}{c}     
(R_2/R_1) \left( \sin (\phi_2-\phi_1) - (\delta/\gamma) \cos (\phi_2-\phi_1) \right) 
\\
(R_1/R_2) \left( \sin (\phi_1-\phi_2) - (d/c) \cos (\phi_1-\phi_2) \right) 
\\
(R_2/R_3) \left( \sin (\phi_2-\phi_3) - (\delta/\gamma) \cos (\phi_2-\phi_3) \right) 
\end{array} \right) \, .$$
 Because $\omega_1\neq \omega_2$ we are able to choose the solutions ${\bf f}_1(\phi)  = (0, 0, 0)$ and  
$${\bf g}_1(\phi) =  \frac{1}{\omega_1-\omega_2} \left( \begin{array}{r}   
(R_2 / R_1) \left( \cos (\phi_2-\phi_1)   + (\delta/\gamma)  \sin (\phi_2-\phi_1) \right) \\
-(R_1 / R_2) \left( \cos (\phi_1-\phi_2)   + (d/c)  \sin (\phi_1-\phi_2) \right) \\
(R_2 / R_3) \left( \cos (\phi_2-\phi_3)   + (\delta/\gamma)  \sin (\phi_2-\phi_3) \right) 
\end{array} \right)\, . $$
%We see from these formulas that ${\bf g}_1^{(1)}(\phi_1, \phi_2, \phi_3) = {\bf g}_1^{(3)}(\phi_1, \phi_2, \phi_1)$ and that ${\bf g}_1^{(2)}(\phi_1, \phi_2, \phi_3)={\bf g}_1^{(2)}(\phi_1, \phi_2, \phi_1)$. That is, $s\circ {\bf g}_1 = {\bf g}_1 \circ s$.  Obviously, because ${\bf f}_1=0$, it also holds that $s\circ {\bf f}_1 = {\bf f}_1\circ s$. 
 
 \subsubsection*{The first normal homological equation}
Another short computation allows us to express the projection $(1-\pi)\cdot {\bf G}_1$  as
$$((1-\pi)\cdot {\bf G}_1)(\phi) = \left( \begin{array}{l} 
e^{i\phi_1} ( \gamma+ i \delta) (R_2/\gamma) \cos (\phi_2-\phi_1) \\ 
e^{i\phi_2} (c+ i d) (R_1/c) \cos (\phi_1-\phi_2) \\ 
e^{i\phi_3} ( \gamma+ i \delta) (R_2/\gamma) \cos (\phi_2-\phi_3)
 \end{array} \right) \, .
$$
This is clearly in the range of $N(\phi)={\rm diag}(e^{i\phi_1}(\gamma+i\delta), e^{i\phi_2}(c+id), e^{i\phi_3}(\gamma+i\delta))$. Thus the first normal homological equation, see \eqref{tworeducedequations2} and \eqref{tworeducedequations}, reads
$$\partial_{\omega}{\bf h}_1(\phi) + {\rm diag}(2\alpha, 2a, 2\alpha){\bf h}_1(\phi) = V_1(\phi) =  \left( \begin{array}{l} 
 (R_2/\gamma) \cos (\phi_2-\phi_1) \\ 
 (R_1/c) \cos (\phi_1-\phi_2) \\ 
 (R_2/\gamma) \cos (\phi_2-\phi_3)
 \end{array} \right) \, .
$$
The solution reads 
$${\bf h}_1(\phi) = \left( \begin{array}{l} 
\frac{R_2}{\gamma(4\alpha^2 + (\omega_1-\omega_2)^2)} \left(2\alpha \cos(\phi_2-\phi_1) + (\omega_2-\omega_1)\sin(\phi_2-\phi_1) \right) 
\\ 
\frac{R_1}{c(4a^2 + (\omega_1-\omega_2)^2)} \left(2a \cos(\phi_1-\phi_2) + (\omega_1-\omega_2)\sin(\phi_1-\phi_2) \right) 
\\
\frac{R_2}{\gamma(4\alpha^2 + (\omega_1-\omega_2)^2)} \left(2\alpha \cos(\phi_2-\phi_3) + (\omega_2-\omega_1)\sin(\phi_2-\phi_3) \right) 
\end{array} \right)\, .
$$
%Again, we see that ${\bf h}_1^{(1)}(\phi_1, \phi_2, \phi_3) = {\bf h}_1^{(3)}(\phi_1, \phi_2, \phi_1)$ and that ${\bf h}_1^{(2)}(\phi_1, \phi_2, \phi_3)={\bf h}_1^{(2)}(\phi_1, \phi_2, \phi_1)$. So $t\circ {\bf h}_1 = {\bf h}_1 \circ s$.

\subsubsection*{Second order terms}
Let us clarify that we will not solve the second order homological equations completely. Instead, the only second order terms that we  compute explicitly are the first and third components ${\bf f}_2^{(1)}$ and ${\bf f}_2^{(3)}$ of the second order part ${\bf f}_2$ of the reduced phase vector field. As was explained above, this suffices to obtain the desired asymptotic expression for 
 $\frac{d}{dt}(\phi_1 - \phi_3) = \varepsilon^2 \left( {\bf f}_2^{(1)}(\phi) - {\bf f}_2^{(3)}(\phi) \right) + \varepsilon^3 \ldots $. 
  
  We first compute the inhomogeneous term ${\bf G}_2$ as given in \eqref{iterativeeqns}. Because ${\bf F}_2=0$ and ${\bf f}_1=0$, we  see that 
  $${\bf G}_2=\frac{1}{2}({\bf F}_0'' \circ e_0)(e_1, e_1) +  ({\bf F}_1' \circ e_0)\cdot e_1$$
  consists only of two terms. It also turns out that the first of these terms  contributes in a rather trivial manner to the phase dynamics at order $\varepsilon^2$. 
 This term can be computed  by making use of the expansion
   \begin{align}\nonumber 
    |R_je^{i\phi_j}+\varepsilon &  e_1^{(j)}(\phi)|^2(R_je^{i\phi_j}+\varepsilon e_1^{(j)}(\phi)) =  R^3_je^{i\phi_j} + \varepsilon R_j^2 \left(2 e_1^{(j)}(\phi) + e^{2i\phi_j}\overline{e_1^{(j)}(\phi)} \right)  \\ \nonumber  & + \varepsilon^2  R_j \left(2 e^{i\phi_j} |e_1^{(j)}(\phi)|^2 +  e^{-i\phi_j}(e_1^{(j)}(\phi))^2 \right) + \mathcal{O}(\varepsilon^3) \, .
  \end{align}
  This leads to the formula 
  \begin{align} \label{D2Omega}
 \frac{1}{2}&({\bf F}_0''(e_0(\phi)) (e_1(\phi), e_1(\phi)) \! = \nonumber \\ &
  \! \underbrace{
  \left(\!\! \begin{array}{l}   
  R_1 (\gamma+i\delta) (2 e^{i\phi_1} |e_1^{(1)}(\phi)|^2 
   \\ 
   R_2 (c+id) (2 e^{i\phi_2} |e_1^{(2)}(\phi)|^2 
   \\
   R_3 (\gamma+i\delta) (2 e^{i\phi_3} |e_1^{(3)}(\phi)|^2 
  \end{array} \!\! \right)  }_{=: T_1(\phi) \in \, {\rm im}\, N(\phi)}
  +  
 \underbrace{ \left( \!\! \begin{array}{l}  
  R_1 (\gamma+i\delta)  e^{-i\phi_1} (e_1^{(1)}(\phi))^2 
   \\
  R_2 (c+id)  e^{-i\phi_2} (e_1^{(2)}(\phi))^2 
    \\
  R_3 (\gamma+i\delta)  e^{-i\phi_3} (e_1^{(3)}(\phi))^2 
  \end{array} \!\! \right)}_{=: T_2(\phi)}
   \, .
  \end{align}
It is clear that the first term on the right hand side of \eqref{D2Omega}---which we called $T_1(\phi)$---lies in the range of $N(\phi)$ because $2 R_j |e_1^{(j)}(\phi)|^2 \in \mathbb{R}$ for $j=1,2,3$. So this first term vanishes when we apply the projection $\pi(\phi)$. 

The projection of the second term on the right hand side of \eqref{D2Omega}---which we called $T_2(\phi)$---can be computed as follows. Recall from \eqref{Ansatz} that 
$e_1(\phi) = e_0'(\phi)\cdot {\bf g}_{1}(\phi) + N(\phi)\cdot {\bf h}_1(\phi)$, where $e_0$, ${\bf g}_1$, $N$ and ${\bf h}_1$ are given in the formulas above. This can be used to expand, first the $(e_1^{(j)}(\phi))^2$, and then $T_2(\phi)$ in trigonometric polynomials.  It is not very hard to see that this must yield a formula of the form 
\begin{align}\nonumber 
& \pi(\phi)T_2(\phi)  =   \left( \begin{array}{l} R_1 i e^{i\phi_1} \left( C+ D \sin(2\phi_2-2\phi_1) + E \cos(2\phi_2-2\phi_1) \right) \\ R_2 i e^{i\phi_2} \left( \tilde C+ \tilde D \sin(2\phi_2-2\phi_1) + \tilde E \cos(2\phi_2-2\phi_1) \right) \\  R_3 i e^{i\phi_3} \left( C + D \sin(2\phi_2-2\phi_3) + E \cos(2\phi_2-2\phi_3) \right)  \end{array} \right) 
\end{align}
 for certain real numbers $C, D, E, \tilde C, \tilde D, \tilde E$ that we shall not explicitly compute here. Note that this  clearly lies in the range of $e_0'(\phi)$. 
It follows that 
\begin{align}\nonumber 
& U_2^{1{\rm st}}(\phi)  =   \left( \begin{array}{l}  C  + D \sin(2\phi_2-2\phi_1) + E \cos(2\phi_2-2\phi_1)  \\   \tilde C+ \tilde D \sin(2\phi_2-2\phi_1) + \tilde E \cos(2\phi_2-2\phi_1)  \\   C + D \sin(2\phi_2-2\phi_3) + E \cos(2\phi_2-2\phi_3)   \end{array} \right) 
\end{align}
is the first part of the inhomogeneous right hand side of the second tangential homogeneous equation $\partial_{\omega}{\bf g_2}+{\bf f}_2 = U_2$. Because $2\omega_1\neq 2\omega_2$, only the constant part $(C, \tilde C, C)$ of this $U_2^{1{\rm st}}(\phi)$ is resonant; all  other terms can be absorbed in ${\bf g}_2$. Thus the resonant normal form of this part of ${\bf f}_2$ is $(C, \tilde C, C)^T$. As this constant vector field does not contribute to $\frac{d}{dt}\left( \phi_1 -   \phi_3 \right)$, we compute neither $C$ nor $\tilde C$ explicitly. 
  
 We proceed by considering the  other term in ${\bf G}_2$, namely $({\bf F}'_1 \circ e_0)\cdot e_1$. Recalling that ${\bf F}_1(z)=(z_2, z_1, z_2)$, we see that this term equals
 $${\bf F}_1'(e_0(\phi))\cdot e_1(\phi) = \left( \begin{array}{c} e_1^{(2)}(\phi) \\ e_1^{(1)}(\phi) \\ e_1^{(2)}(\phi) \end{array} \right) = 
 \left( \begin{array}{l} e^{i\phi_2} (iR_2{\bf g}_1^{(2)}(\phi) + (c + i d){\bf h}_1^{(2)}(\phi) ) \\
 e^{i\phi_1} (iR_1{\bf g}_1^{(1)}(\phi) + (\gamma + i \delta){\bf h}_1^{(1)}(\phi) ) \\
e^{i\phi_2} (iR_2{\bf g}_1^{(2)}(\phi) + (c + i d){\bf h}_1^{(2)}(\phi) )  
  \end{array} \right) \, .$$
Using the expressions for $\pi(\phi)$, ${\bf g}_1(\phi)$ and ${\bf h}_1(\phi)$ provided above, one can compute that the projection of this term has the form 
  \begin{align}
  & \pi(\phi)\cdot  {\bf F}_1'(e_0(\phi))\cdot e_1(\phi) =   \left( \begin{array}{ccc} i R_1 e^{i\phi_1} & 0 & 0  \\  0 & i R_2 e^{i\phi_2} & 0 \\ 0 & 0& i R_3 e^{i\phi_3} \end{array} \right) \cdot U_2^{2{\rm nd}}(\phi)\, ,
  \end{align}
  in which now
  \begin{align}  
  U_2^{2{\rm nd}}(\phi) = \left( \begin{array}{l}
    B +  F\sin( 2 \phi_1 -2\phi_2) + G \cos ( 2 \phi_1 -2\phi_2)  \\
   \tilde  B +  \tilde F\sin( 2 \phi_1 -2\phi_2) + \tilde G \cos ( 2 \phi_1 -2\phi_2)  \\
 \left\{  \begin{array}{l}
A \sin(\phi_1-\phi_3) + B\cos(\phi_1-\phi_3) \\ +  F\sin( \phi_1+\phi_3-2\phi_2) + G \cos ( \phi_1+\phi_3-2\phi_2) \end{array}
  \right\} \end{array}
 \right)    \, .
   \end{align}
  With some effort the constants $A$ and $B$ can be computed by hand, yielding
    \begin{align}\label{ABagain} \begin{array}{ll}
 A &\!\!\! = \frac{1}{4a^2+(\omega_1-\omega_2)^2} \left( \frac{\delta}{\gamma}(\omega_2-\omega_1) + a \left( 1 + 
\frac{d \delta}{c \gamma} \right)   + 2a^2 \left(\frac{d}{c} + \frac{\delta}{\gamma} \right)   \frac{1}{\omega_2-\omega_1} \right) \, ,
 \\ 
 B  & \!\!\! = \frac{1}{4a^2+(\omega_1-\omega_2)^2}\left( (\omega_2-\omega_1) + a \left(\frac{d}{c} - 
\frac{\delta}{\gamma} \right)   + 2a^2 \left(1 - \frac{d \delta}{c\gamma} \right)   \frac{1}{\omega_2-\omega_1}  \right) \, .
\end{array} 
 \end{align} 
We did not compute any of the other constants. 
 As $\omega_1=\omega_3 \neq \omega_2$, the resonant part of   $U_2^{2{\rm nd}}(\phi)$ is given by 
 ${\bf f}_2(\phi) = (B, \tilde B, A\sin(\phi_1-\phi_3) + B\cos(\phi_1-\phi_3))^T$. The other terms in  $U_2^{2{\rm nd}}(\phi)$ can be absorbed into ${\bf g}_2$ when solving the tangential homological equation $\partial_{\omega}{\bf g_2}+{\bf f}_2 = U_2$.   %$$R_1(\gamma + i \delta) (i R_1 X_3+ (\gamma+i\delta)Y_3)^2  = R_1 $$

\subsubsection*{Conclusion}
 To summarise, we computed that ${\bf f}_1(\phi) = (0, 0,0)^T$  and 
 \begin{align}\label{f23}
{\bf  f}_2(\phi) =   \left( \begin{array}{c}  B+C \\  \tilde B +\tilde C \\ A \sin (\phi_1-\phi_3) + B \cos  (\phi_1-\phi_3)  + C  \end{array} \right) \, .
 \end{align}
 The constants $A$ and $B$ are given in \eqref{ABagain}, but we did not compute $\tilde B, C$ or $\tilde C$. 
   Because $\omega_1=\omega_3$ and $\dot \phi = \omega + \varepsilon {\bf f}_1(\phi) + \varepsilon^2 {\bf f}_2(\phi) + \mathcal{O}(\varepsilon^3)$, we conclude that 
\begin{align}   
\frac{d}{dt} (\phi_1 - \phi_3) = \varepsilon^2 \left( -A \sin (\phi_1-\phi_3) - B \cos  (\phi_1-\phi_3)  + B \right) + \mathcal{O}(\varepsilon^3)  \, .
\end{align}
  This is exactly equation \eqref{phi1minphi3}.

\section{Acknowledgements}
We would like to thank Edmilson Roque and Deniz Eroglu for useful tips regarding numerics.
S.v.d.G. was partially funded by the Deutsche Forschungsgemeinschaft (DFG, German Research Foundation)--–453112019. E.N. was partially supported by the Serrapilheira Institute (Grant No. Serra-1709-16124). 
B.R. acknowledges funding and hospitality of the Sydney Mathematical Research Institute.
  \bibliography{CoupledNetworks}
\bibliographystyle{amsplain}

  \end{document}